\theoremstyle{plain}
\newtheorem {lemma}{Lemma}
\newtheorem {theorem}[lemma]{Theorem}
\newtheorem {corollary}[lemma]{Corollary}
\theoremstyle{definition}
\newtheorem{definition}[lemma]{Definition}
\newtheorem{remark}[lemma]{Remark}
\newtheorem {question}[lemma]{Question}
\newcommand{\N}{\mathbb{N}}
\newcommand{\+}{\overset{.}{+}}
\newcommand{\minus}{\overset{.}{-}}
\newcommand{\h}{\mathfrak{H}}
\newcommand{\tr}{\operatorname{tr}}
\newcommand{\hb}{\operatorname{hb}}
\newcommand{\GL}{\operatorname{GL}}
\newcommand{\U}{\operatorname{U}}
\newcommand{\EU}{\operatorname{EU}}
\newcommand{\NU}{\operatorname{NU}}
\newcommand{\CU}{\operatorname{CU}}
\newcommand{\Ort}{\operatorname{O}}
\newcommand{\Sp}{\operatorname{Sp}}
\newcommand{\Normaliser}{\operatorname{Normaliser}}
\newcommand{\Mat}{\operatorname{M}}
\newcommand{\cn}{\operatorname{cn}}
\newcommand{\scn}{\operatorname{scn}}
\newcommand{\id}{\operatorname{id}}
\newcommand{\diag}{\operatorname{diag}}
\newcommand{\short}{\operatorname{short}}
\newcommand{\extra}{\operatorname{extra}}
\title{Elementary covering numbers in odd-dimensional unitary groups}
\author{Raimund Preusser}
\date{}
\begin{document}
\maketitle
\begin{abstract}
\noindent
Let $(K,\Delta)$ be a Hermitian form field and $n\geq 3$. We prove that if $\sigma\in \U_{2n+1}(K,\Delta)$ is a unitary matrix of level $(K,\Delta)$, then any short root transvection $T_{ij}(x)$ is a product of $4$ elementary unitary conjugates of $\sigma$ and $\sigma^{-1}$. Moreover, the bound $4$ is sharp. We also show that any extra short root transvection $T_i(x,y)$ is a product of $12$ elementary unitary conjugates of $\sigma$ and $\sigma^{-1}$. If the level of $\sigma$ is $(0,K\times 0)$, then any $(0,K\times 0)$-elementary extra short root transvection $T_i(x,0)$ is a product of $2$ elementary unitary conjugates of $\sigma$ and $\sigma^{-1}$.
\end{abstract}
\let\thefootnote\relax\footnotetext{{\it 2020 Mathematics Subject Classification.}  20E45, 20H20.}
\let\thefootnote\relax\footnotetext{{\it Keywords and phrases.} classical-like groups, conjugacy classes, covering numbers.}
\let\thefootnote\relax\footnotetext{The work is supported by the Russian Science Foundation grant 19-71-30002.}

\section{Introduction}
The investigation of products of conjugacy classes in different types of groups is a popular topic in group theory during the last 30-40 years. Many papers were devoted to
this theme, for example \cite{c1,c2,c3,c4,c4_2,gordeev-saxl, gordeev-saxl_2,c5,c6,c7,c8,c9,c10,c11}. A lot of these works are concerned with the computation of covering numbers. 

Let $G$ be a group and and $S\subseteq G$ a subset. For any subset $X\subseteq G$ define
$\cn_{X}(S)$ as the least positiv integer $m$ such that $S\subseteq X^m=\{x_1\dots x_m\mid x_1,\dots,x_m\in X\}$. If no such $m$ exists, then $\cn_{X}(S):=\infty$. We call $\cn_X(S)$ the {\it covering number of $S$ with respect to $X$}. For any set $\mathcal{X}$ of subsets of $G$ define $\cn_{\mathcal{X}}(S)$ as the supremum of all covering numbers $\cn_X(S)$ where $X\in \mathcal{X}$. We call $\cn_\mathcal{X}(S)$ the {\it covering number of $S$ with respect to $\mathcal{X}$}. Note that if $\mathcal{X}$ is the set of all conjugacy classes in $G$ that are not contained in a proper normal subgroup, then $\cn_\mathcal{X}(G)$ is the usual covering number $\cn(G)$ as defined in \cite{gordeev-saxl}. We call $\scn_X(S):=\cn_{X\cup X^{-1}}(S)$ the {\it symmetric covering number of $S$ with respect to $X$} and $\scn_\mathcal{X}(S):=\sup\{\scn_X(S)\mid X\in \mathcal{X}\}$ the {\it symmetric covering number of $S$ with respect to $\mathcal{X}$}.

The hyperbolic unitary groups $\U_{2n}(R,\Lambda)$ were defined by A. Bak in 1969 \cite{bak}. They embrace the classical Chevalley groups of type $C_m$ and $D_m$, namely the even-dimensional symplectic and orthogonal groups $\Sp_{2n}(R)$ and $\Ort_{2n}(R)$. In 2018, A. Bak and the author  defined odd-dimensional unitary groups $\U_{2n+1}(R,\Delta)$ \cite{bak-preusser}. These groups generalise the even-dimensional unitary groups $\U_{2n}(R,\Lambda)$ and embrace all classical Chevalley groups. The groups $\U_{2n+1}(R,\Delta)$ are in turn embraced by V. Petrov's odd unitary groups, which were introduced in \cite{petrov}.

Let $(K,\Delta)$ be a Hermitian form field and $n\geq 3$. Denote the odd unitary group $\U_{2n+1}(K,\Delta)$ by $G$ and its elementary subgroup $\EU_{2n+1}(K,\Delta)$ by $E$. It follows from the Sandwich Classification Theorem \ref{SCT} that if $H$ is a subgroup of $G$ normalised by $E$, then there is a unique odd form ideal $(I,\Omega)$ of $(K,\Delta)$ such that
\[\EU_{2n+1}((K,\Delta),(I,\Omega))\subseteq H \subseteq \CU_{2n+1}((K,\Delta),(I,\Omega))\]
where $\EU_{2n+1}((K,\Delta),(I,\Omega))$ denotes the relative elementary subgroup of level $(I,\Omega)$ and $\CU_{2n+1}((K,\Delta),(I,\Omega))$ the full congruence subgroup of level $(I,\Omega)$. The odd form ideal $(I,\Omega)$ is called the \emph{level} of $H$. The {\it level} of a conjugacy class $C$ in $G$ is the level of the subgroup of $G$ generated by $C$.

Let $\mathcal{C}$ denote the set of all conjugacy classes of level $(K,\Delta)$, $S_{\short}$ the set of all nontrivial short root transvections and $S_{\extra}$ the set of all nontrivial extra short root transvections. In this paper we prove that $\scn_{\mathcal{C}}(S_{\short})\leq 4$ and $\scn_{\mathcal{C}}(S_{\extra})\leq 12$. Moreover, we show that the bound $\scn_{\mathcal{C}}(S_{\short})\leq 4$ is sharp, i.e. there is no better bound valid for all Hermitian form fields $(K,\Delta)$ and $n\geq 3$. 

If the Hermitian form $B$ is degenerate and $K\times 0\subseteq \Delta$, then there is a second nonzero odd form ideal, namely $(0,K\times 0)$. Let $\mathcal{D}$ denote the set of all conjugacy classes of level $(0,K\times 0)$ and $T$ the set of all nontrivial $(0,K\times 0)$-elementary extra short root transvections. We prove that $\scn_{\mathcal{D}}(T)=1$ if $K=\mathbb{F}_2$ and $(0,1)\in \Delta$, and $\scn_{\mathcal{D}}(T)=2$ otherwise.  

The rest of the paper is organised as follows. In Section 2 we recall some standard notation which is used throughout the paper. In Section 3, we recall the definitions of the groups $\U_{2n+1}(R,\Delta)$ and some important subgroups. In Section 4, we prove our main results, namely Theorems \ref{thmm1}, \ref{thmm2}, \ref{thmm3} and \ref{thmm4}. The results of Section 4 are still valid if one replaces all occurrences of ``conjugacy class'' by ``$E$-class'' or more generally by ``$H$-class'' where $E\leq H\leq G$ is a fixed intermediate group. 

\section{Notation}
$\mathbb{N}$ denotes the set of all positive integers. If $G$ is a group and $g,h\in G$, we let $g^h:=h^{-1}gh$, $^hg:=hgh^{-1}$ and $[g,h]:=ghg^{-1}h^{-1}$. By a ring we mean an associative ring with $1$ such that $1\neq 0$. By an ideal we mean a two-sided ideal. If $m,n\in \N$ and $R$ is a ring, then the set of all $m\times n$ matrices over $R$ is denoted by $\Mat_{m\times n}(R)$. Instead of $\Mat_{n\times n}(R)$ we may write $\Mat_n(R)$. If $\sigma\in M_{m\times n}(R)$, we denote the transpose of $\sigma$ by $\sigma^t$, the entry of $\sigma$ at position $(i,j)$ by $\sigma_{ij}$, the $i$-th row of $\sigma$ by $\sigma_{i*}$ and the $j$-th column of $\sigma$ by $\sigma_{*j}$. The group of all invertible matrices in $M_{n}(R)$ is denoted by $\GL_n(R)$ and the identity element of $\GL_n(R)$ by $e$ or $e_{n\times n}$. If $\sigma\in \GL_n(R)$, then the entry of $\sigma^{-1}$ at position $(i,j)$ is denoted by $\sigma'_{ij}$, the $i$-th row of $\sigma^{-1}$ by $\sigma'_{i*}$ and the $j$-th column of $\sigma^{-1}$ by $\sigma'_{*j}$.  Furthermore, we denote by $^n\!R$ the set of all row vectors of length $n$ with entries in $R$ and by $R^n$ the set of all column vectors of length $n$ with entries in $R$. We consider $^n\!R$ as left $R$-module and $R^n$ as right $R$-module.

\section{Odd-dimensional unitary groups}
We describe Hermitian form rings $(R,\Delta)$ and odd form ideals $(I,\Omega)$ first, then the odd-dimensional unitary group $\U_{2n+1}(R,\Delta)$ and its elementary subgroup $\EU_{2n+1}(R,\Delta)$ over a Hermitian form ring $(R,\Delta)$. For an odd form ideal $(I,\Omega)$, we recall the definitions of the following subgroups of $\U_{2n+1}(R,\Delta)$: the preelementary subgroup $\EU_{2n+1}(I, \Omega)$ of level $(I,\Omega)$, the elementary subgroup $\EU_{2n+1}((R,\Delta),(I,\Omega))$ of level $(I,\Omega)$, the principal congruence subgroup $\U_{2n+1}((R,\Delta),(I,\Omega))$ of level $(I,\Omega)$, the normalised principal congruence subgroup $\NU_{2n+1}((R,\Delta),(I,\Omega))$ of level $(I,\Omega)$, and the full congruence subgroup $\CU_{2n+1}((R,\Delta),(I,\Omega))$ of level $(I,\Omega)$.

\subsection{Hermitian form rings and odd form ideals}\label{sec 3.1}
First we recall the definitions of a ring with involution with symmetry and a Hermitian ring.
\begin{definition}
Let $R$ be a ring and 
\begin{align*}
\bar{}:R&\rightarrow R\\
x&\mapsto \bar{x}
\end{align*}
an anti-isomorphism of $R$ (i.e. $\bar{}~$ is bijective, $\overline{x+y}=\bar x+\bar y$, $\overline{xy}=\bar y\bar x$ for any $x,y\in R$ and $\bar 1=1$). Furthermore, let $\lambda\in R$ such that $\bar{\bar x}=\lambda x\bar\lambda$ for any $x\in R$. Then $\lambda$ is called a {\it symmetry} for $~\bar{}~$, the pair $(~\bar{}~,\lambda)$ an {\it involution with symmetry} and the triple $(R,~\bar{}~,\lambda)$ a {\it ring with involution with symmetry}. A subset $A\subseteq R$ is called {\it involution invariant} iff $\bar x\in A$ for any $x\in A$. A {\it Hermitian ring} is a quadruple $(R,~\bar{}~,\lambda,\mu )$ where $(R,~\bar{}~,\lambda)$ is a ring with involution with symmetry and $\mu \in R$ is a ring element such that $\mu =\bar\mu \lambda$ .
\end{definition}
\begin{remark}\label{25}
Let $(R,~\bar{}~,\lambda,\mu )$ be a Hermitian ring.
\begin{enumerate}[(a)]
\item It is easy to show that $\bar\lambda=\lambda^{-1}$.
\item The map
\begin{align*}
\b{}:R&\rightarrow R\\
x& \mapsto \b{x}:=\bar\lambda \bar x\lambda
\end{align*}
is the inverse map of $~\bar{}~$. One checks easily that $(R,~\b{}~,\b{$\lambda$},\b{$\mu $})$ is a Hermitian ring.
\end{enumerate}
\end{remark}

Next we recall the definition of an $R^{\bullet}$-module.
\begin{definition}
If $R$ is a ring, let $R^\bullet$ denote the underlying set of the ring equipped with the  
multiplication of the ring, but not the addition of the ring. A {\it (right) $R^{\bullet}$-module} is a not 
necessarily abelian group $(G,\+)$ equipped with a map
\begin{align*}
\circ: G\times R^{\bullet}&\rightarrow G\\
(a,x) &\mapsto a\circ x
\end{align*}
such that the following holds:
\begin{enumerate}[(i)]
\item $a\circ 0=0$ for any $a\in G$,
\item $a\circ 1=a$ for any $a\in G$,
\item $(a\circ x)\circ y=a\circ (xy)$ for any $a\in G$ and $x,y\in R$ and
\item $(a\+ b)\circ x=(a\circ x)\+(b\circ x)$ for any $a,b\in G$ and $x\in R$.
\end{enumerate}
Let $G$ and $G'$ be $R^{\bullet}$-modules. A group homomorphism $f:G\rightarrow G'$ satisfying $f(a\circ x)=f(a)\circ x$ for any $a\in G$ and $x\in R$ is called a {\it  homomorphism of $R^{\bullet}$-modules}. A subgroup $H$ of $G$ which is $\circ$-stable (i.e. $a\circ x\in H$ for any $a\in H$ and $x\in R$) is called an {\it $R^{\bullet}$-submodule}. Moreover, if $A\subseteq G$ and $B\subseteq R$, we denote by $A\circ B$ the subgroup of $G$ generated by $\{a\circ b\mid a\in A,b\in B\}$. We treat $\circ$ as an operator with higher priority than $\+$.
\end{definition}

An important example of an $R^{\bullet}$-module is the Heisenberg group, which we define next. 

\begin{definition}\label{27}
Let $(R,~\bar{}~,\lambda,\mu )$ be a Hermitian ring. Define the map.
\begin{align*}
\+: (R\times R)\times (R\times R) &\rightarrow R\times R\\
((x_1,y_1),(x_2,y_2))&\mapsto (x_1,y_1)\+ (x_2,y_2):=(x_1+x_2,y_1+y_2-\bar x_1\mu  x_2).
\end{align*}
Then $(R\times R,\+)$ is a group, which we call the {\it Heisenberg group} and denote by $\h$. Equipped with the map
\begin{align*}
\circ:(R\times R)\times R^{\bullet}&\rightarrow R\times R\\
((x,y),a)&\mapsto (x,y)\circ a:=(xa,\bar aya)
\end{align*}
$\h$ becomes an $R^{\bullet}$-module. 
\end{definition}
\begin{remark}
We denote the inverse of an element $(x,y)\in \h$ by $\minus(x,y)$. One checks easily that $\minus(x,y)=(-x,-y-\bar x\mu  x)$ for any $(x,y)\in \h$.
\end{remark}

In order to define the odd-dimensional unitary groups we need the notion of a Hermitian form ring.
\begin{definition}
Let $(R,~\bar{}~,\lambda,\mu )$ be a Hermitian ring. Let $(R,+)$ have the $R^{\bullet}$-module structure defined by $x\circ a = \bar{a}xa$. Define the {\it trace map}
\begin{align*}
\tr:\h&\rightarrow R\\
(x,y)&\mapsto \bar x\mu  x+y+\bar y\lambda.
\end{align*}
One checks easily that $\tr$ is a homomorphism of $R^{\bullet}$-modules. Set \[\Delta_{\min}:=\{(0,x-\overline{x}\lambda)\mid x\in R\}\] and \[\Delta_{\max}:=\ker(\tr).\] An $R^{\bullet}$-submodule $\Delta$ of $\h$ lying between $\Delta_{\min}$ and $\Delta_{\max}$ is called an {\it odd form parameter} for $(R,~\bar{}~,\lambda,\mu )$. Since $\Delta_{\min}$ and $\Delta_{\max}$ are $R^{\bullet}$-submodules of $\h$, they are respectively the smallest and largest odd form parameters. A pair $((R,~\bar{}~,\lambda,\mu ),\Delta)$ is called a {\it Hermitian form ring}. We shall usually abbreviate it by $(R,\Delta)$. 
\end{definition}
 
Next we define an odd form ideal of a Hermitian form ring. 
\begin{definition}
Let $(R,\Delta)$ be a Hermitian form ring and $I$ an involution invariant ideal of $R$. Set $J(\Delta):=\{y\in R\mid\exists z\in R:(y,z)\in \Delta\}$ and $\tilde I:=\{x\in R\mid\overline{J(\Delta)}\mu  x\subseteq I\}$. Obviously $\tilde I$ and $J(\Delta)$ are right ideals of $R$ and $I\subseteq \tilde I$. Moreover, set \[\Omega^I_{\min}:=\{(0,x-\bar x\lambda)\mid x\in I\}\+ \Delta\circ I\] and \[\Omega^I_{\max}:=\Delta\cap (\tilde I\times  I).\]
An $R^{\bullet}$-submodule $\Omega$ of $\h$ lying between $\Omega^I_{\min}$ and $\Omega^I_{\max}$ is called a {\it relative odd form parameter of level $I$}. Since $\Omega^I_{\min}$ and $\Omega^I_{\max}$ are $R^{\bullet}$-submodules of $\h$, they are respectively the smallest and the largest relative odd form parameters of level $I$. If $\Omega$ is a relative odd form parameter of level $I$, then $(I,\Omega)$ is called an {\it odd form ideal} of $(R,\Delta)$.
\end{definition}

\subsection{The odd-dimensional unitary group}
Let $(R,\Delta)$ be a Hermitian form ring and $n\in \mathbb{N}$. Set $M:=R^{2n+1}$. We use the following indexing for the elements of the standard basis of $M$: $(e_1,\dots,e_n,e_0,e_{-n},\dots,e_{-1})$.
That means that $e_i$ is the column whose $i$-th coordinate is one and all the other coordinates are zero if $1 \leq i\leq n$, the column whose $(n+1)$-th coordinate is one and all the other coordinates are zero if $i=0$, and the column whose $(2n+2+i)$-th coordinate is one and all the other coordinates are zero if $-n\leq i \leq -1$. If $u\in M$, then we call $(u_1,\dots,u_n,u_{-n},\dots,$ $u_{-1})^t\in R^{2n}$ the {\it hyperbolic part} of $u$ and denote it by $u_{\hb}$. We set $u^*:=\bar u^t$ and $u_{\hb}^*:=\bar u_{\hb}^t$. Moreover, we define the maps
\begin{align*}
B:M\times M&\rightarrow R\\
(u,v)&\mapsto u^*\begin{pmatrix} 0& 0 & \pi\\0&\mu &0\\ \pi\lambda &0 &0 \end{pmatrix}v=\sum\limits_{i=1}^{n}\bar u_i v_{-i}+\bar u_0\mu  v_0+\sum\limits_{i=-n}^{-1}\bar u_{i}\lambda v_{-i}
\end{align*}
and 
\begin{align*}
Q:M&\rightarrow \h\\
u&\mapsto (Q_1(u),Q_2(u)):=(u_0,u_{\hb}^*\begin{pmatrix} 0&\pi\\0&0 \end{pmatrix}u_{\hb})=(u_0,\sum\limits_{i=1}^{n}\bar u_i u_{-i})			
\end{align*}
where $\pi\in M_n(R)$ denotes the matrix with ones on the skew diagonal and zeros elsewhere.

\begin{lemma}[{\cite[Lemma 12]{bak-preusser}}]
~\\
\vspace{-0.6cm}
\begin{enumerate}[(i)]
\item $B$ is a \textnormal{$\lambda$-Hermitian form}, i.e. $B$ is biadditive, $B(ux,vy)=\bar x B(u,v) y~\forall u,v\in M,x,y\in R$ and $B(u,v)=\overline{B(v,u)}\lambda~\forall u,v\in M$.
\item $Q(ux)=Q(u)\circ x~\forall u\in M, x\in R$, $Q(u+v)\equiv Q(u)\+ Q(v)\+(0,B(u,v))\bmod \Delta_{\min}~\forall u,v\in M$ and $\tr(Q(u))=B(u,u)~\forall u\in M$.
\end{enumerate}
\end{lemma}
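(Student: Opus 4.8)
The plan is to verify both assertions by direct computation from the explicit formulas for $B$, $Q$, $\tr$, $\+$ and $\circ$, using only the structural identities available in a Hermitian ring: that the involution $\bar{}$ is an additive anti-isomorphism, that $\bar{\bar x}=\lambda x\bar\lambda$ for all $x\in R$, that $\bar\lambda=\lambda^{-1}$ (Remark \ref{25}(a)), and that $\mu=\bar\mu\lambda$. Throughout I would freely reindex a sum over $\{-n,\dots,-1\}$ as a sum over $\{1,\dots,n\}$ via $i\mapsto -i$.

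For (i), biadditivity of $B$ is immediate from $B(u,v)=\sum_{i=1}^{n}\bar u_iv_{-i}+\bar u_0\mu v_0+\sum_{i=-n}^{-1}\bar u_i\lambda v_{-i}$ together with additivity of $\bar{}$, and $B(ux,vy)=\bar xB(u,v)y$ follows by substituting $(ux)_i=u_ix$, $(vy)_i=v_iy$ and pulling $\bar x$ to the left and $y$ to the right of each summand. For the $\lambda$-Hermitian symmetry $B(u,v)=\overline{B(v,u)}\lambda$, I would write out $B(v,u)$, apply $\bar{}$ to each of its three families of summands (reversing each product), multiply the result on the right by $\lambda$, and simplify using $\bar{\bar x}=\lambda x\bar\lambda$, $\bar\lambda\lambda=1$ and $\mu=\bar\mu\lambda$; after a final reindexing the three families land exactly on the three families of summands of $B(u,v)$.

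For (ii), the identity $Q(ux)=Q(u)\circ x$ is a one-line check from $Q_1(ux)=u_0x$ and $Q_2(ux)=\sum_i\bar x\bar u_iu_{-i}x=\bar x Q_2(u)x$, and $\tr(Q(u))=B(u,u)$ follows by expanding $\tr(Q(u))=\bar u_0\mu u_0+\sum_i\bar u_iu_{-i}+\overline{\sum_i\bar u_iu_{-i}}\,\lambda$, rewriting $\overline{\bar u_iu_{-i}}\,\lambda=\bar u_{-i}\lambda u_i$ via $\bar{\bar u_i}=\lambda u_i\bar\lambda$ and $\bar\lambda\lambda=1$, and reindexing. The only step that is not purely formal is the congruence $Q(u+v)\equiv Q(u)\+Q(v)\+(0,B(u,v))\bmod\Delta_{\min}$. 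Here I would expand $Q_2(u+v)=\sum_i(\bar u_i+\bar v_i)(u_{-i}+v_{-i})$ into four sums, compute the second coordinate of $Q(u)\+Q(v)\+(0,B(u,v))$ using the twisted addition of $\h$ — noting that the term $-\bar u_0\mu v_0$ produced by $\+$ cancels the $\bar u_0\mu v_0$ coming from $B(u,v)$ — and find that the difference of the second coordinate of $Q(u+v)$ and that of $Q(u)\+Q(v)\+(0,B(u,v))$ equals $\sum_{i=1}^n\bar v_iu_{-i}-\sum_{i=1}^n\bar u_{-i}\lambda v_i$, while the first coordinates both equal $u_0+v_0$. Setting $x:=\sum_{i=1}^n\bar u_{-i}\lambda v_i$, the identities $\bar{\bar u_{-i}}=\lambda u_{-i}\bar\lambda$ and $\bar\lambda\lambda=1$ give $\bar x\lambda=\sum_{i=1}^n\bar v_iu_{-i}$, so that difference is $-(x-\bar x\lambda)=(-x)-\overline{(-x)}\lambda$, hence lies in $\Delta_{\min}$; since elements of the form $(0,z)$ are central in $\h$, this shows $Q(u+v)$ and $Q(u)\+Q(v)\+(0,B(u,v))$ differ by an element of $\Delta_{\min}$, which is the claimed congruence.

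I expect the only obstacle to be bookkeeping: keeping the reindexing $i\mapsto -i$ consistent and tracking precisely where the symmetry $\lambda$ lands each time $\bar{}$ is applied — in particular that $\bar{\bar x}=\lambda x\bar\lambda$ rather than $x$, which is exactly why the various $\lambda$'s and $\bar\lambda$'s in the computation all cancel correctly. There is no conceptual content here; this is \cite[Lemma 12]{bak-preusser}, and in the paper one may simply cite it.
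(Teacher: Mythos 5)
Your computations are correct: the verification of the $\lambda$-Hermitian symmetry, the identity $\tr(Q(u))=B(u,u)$, and in particular the cancellation of $\bar u_0\mu v_0$ against the twist term of $\+$ and the identification of the leftover $\sum\bar v_iu_{-i}-\sum\bar u_{-i}\lambda v_i$ as an element $(-x)-\overline{(-x)}\lambda$ of $\Delta_{\min}$ (using centrality of $(0,z)$ in $\h$) all check out. The paper itself gives no proof and simply cites \cite[Lemma 12]{bak-preusser}; your direct verification is exactly the computation behind that citation, so there is nothing to compare beyond noting that citing the source, as you observe, suffices here.
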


\begin{definition}
The group $\U_{2n+1}(R,\Delta):=$
\begin{align*}
\{\sigma\in \GL_{2n+1}(R)\mid B(\sigma u,\sigma v)=B(u,v)\land Q(\sigma u)\equiv Q(u)\bmod \Delta~\forall u,v\in M\}
\end{align*}
is called the {\it odd-dimensional unitary group}.
\end{definition}


\begin{remark}\label{34}
The groups $\U_{2n+1}(R,\Delta)$ include as special cases the even-dimensional unitary groups $\U_{2n}(R,\Lambda)$ and all classical Chevalley groups. On the other hand, the groups $\U_{2n+1}(R,\Delta)$ are embraced by Petrov's odd unitary groups $\U_{2l}(R,\mathfrak{L})$. For details see \cite[Remark 14(c) and Example 15]{bak-preusser}. 
\end{remark}

\begin{definition}
We define the sets $\Theta_+:=\{1,\dots,n\}$, $\Theta_-:=\{-n,\dots,-1\}$, $\Theta:=\Theta_+\cup\Theta_-\cup\{0\}$ and $\Theta_{\hb}:=\Theta\setminus \{0\}$. Moreover, we define the map 
\begin{align*}\epsilon:\Theta_{\hb} &\rightarrow\{\pm 1\}\\
i&\mapsto\begin{cases} 1, &\mbox{if } i\in\Theta_+, \\ 
-1, & \mbox{if } i\in\Theta_-. \end{cases}
\end{align*}
\end{definition}


\begin{lemma}[{\cite[Lemma 17]{bak-preusser}}]\label{36}
Let $\sigma\in \GL_{2n+1}(R)$. Then $\sigma\in \U_{2n+1}(R,\Delta)$ iff Conditions (i) and (ii) below are satisfied. 
\begin{enumerate}[(i)]
\item \begin{align*}
       \sigma'_{ij}&=\lambda^{-(\epsilon(i)+1)/2}\bar\sigma_{-j,-i}\lambda^{(\epsilon(j)+1)/2}~\forall i,j\in\Theta_{\hb},\\
       \mu \sigma'_{0j}&=\bar\sigma_{-j,0}\lambda^{(\epsilon(j)+1)/2}~\forall j\in\Theta_{\hb},\\
       \sigma'_{i0}&=\lambda^{-(\epsilon(i)+1)/2}\bar\sigma_{0,-i}\mu ~\forall i\in\Theta_{\hb} \text { and}\\
       \mu \sigma'_{00}&=\bar\sigma_{00}\mu .
      \end{align*}
\item \begin{align*}
Q(\sigma_{*j})\equiv (\delta_{0j},0)\bmod \Delta ~\forall j\in \Theta.
\end{align*} 
\end{enumerate}
\end{lemma}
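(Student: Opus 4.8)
The plan is to separate the two defining conditions of $\U_{2n+1}(R,\Delta)$ — preservation of the $\lambda$-Hermitian form $B$, and preservation of $Q$ modulo $\Delta$ — and to match them with (i) and (ii) respectively. First I would prove that Condition (i) is equivalent to $B(\sigma u,\sigma v)=B(u,v)$ for all $u,v\in M$. Writing $\Phi:=\begin{pmatrix} 0& 0 & \pi\\0&\mu &0\\ \pi\lambda &0 &0 \end{pmatrix}$ for the Gram matrix of $B$ in the standard basis, so that $B(u,v)=u^{*}\Phi v$, one checks $(\sigma u)^{*}=u^{*}\sigma^{*}$ and hence $B(\sigma u,\sigma v)=u^{*}\sigma^{*}\Phi\sigma v$; plugging in standard basis vectors, preservation of $B$ is equivalent to $\sigma^{*}\Phi\sigma=\Phi$, which (as $\sigma\in\GL_{2n+1}(R)$) is in turn equivalent to $\Phi\sigma^{-1}=\sigma^{*}\Phi$. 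Since $\Phi$ has exactly one nonzero entry in each row and column — namely $\Phi_{i,-i}=\lambda^{(1-\epsilon(i))/2}$ for $i\in\Theta_{\hb}$ and $\Phi_{00}=\mu$ — writing $\Phi\sigma^{-1}=\sigma^{*}\Phi$ out entry by entry and relabelling $i\mapsto-i$ reproduces exactly the four families of equalities in (i).

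Next, assuming $B$ is preserved, I would show that Condition (ii) is equivalent to $Q(\sigma u)\equiv Q(u)\bmod\Delta$ for all $u\in M$. Since $\sigma_{*j}=\sigma e_j$ and $Q(e_j)=(\delta_{0j},0)$, Condition (ii) is just the special case $u=e_j$, so the implication ``$\Leftarrow$'' is immediate. For the converse, the first point is that $\Delta$ is normal in $\h$: indeed $[\h,\h]\subseteq\Delta_{\min}$, since using $\mu=\bar\mu\lambda$ and $\bar\lambda=\lambda^{-1}$ one computes $[(x_1,y_1),(x_2,y_2)]=(0,\,\bar x_2\mu x_1-\overline{\bar x_2\mu x_1}\lambda)\in\Delta_{\min}$, so that $\h/\Delta$ is abelian and reduction modulo $\Delta$ behaves additively. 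Using $Q((\sigma e_j)c)=Q(\sigma e_j)\circ c$, $Q(e_jc)=Q(e_j)\circ c$ and the $\circ$-stability of $\Delta$, Condition (ii) upgrades to $Q(\sigma(e_jc))\equiv Q(e_jc)\bmod\Delta$ for all $j\in\Theta$ and $c\in R$. Then I would take an arbitrary $u=\sum_{j\in\Theta}e_ju_j$ and induct on the number of nonzero coordinates: writing $u=v+e_ku_k$ and applying the polarization identity $Q(a+b)\equiv Q(a)\+Q(b)\+(0,B(a,b))\bmod\Delta_{\min}$ of \cite[Lemma 12]{bak-preusser} to both $u$ and $\sigma u$, the two cross terms $(0,B(v,e_ku_k))$ and $(0,B(\sigma v,\sigma(e_ku_k)))$ coincide because $B$ is preserved, so in the abelian group $\h/\Delta$ the identity $Q(\sigma u)\equiv Q(u)$ follows from the inductive hypothesis and the base case.

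Putting the two steps together, $\sigma\in\U_{2n+1}(R,\Delta)$ if and only if $B$ is preserved and $Q(\sigma u)\equiv Q(u)\bmod\Delta$ for all $u$, if and only if (i) and (ii) hold. I expect the main obstacle to be the bookkeeping in the first step — keeping every power of $\lambda$ and every reflected index straight when turning the clean matrix identity $\Phi\sigma^{-1}=\sigma^{*}\Phi$ into the four stated scalar relations. The second step is longer but amounts to the standard principle that a quadratic form is pinned down by its values on a basis once its polarization (here $B$) is under control.
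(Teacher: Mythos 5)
This lemma is imported verbatim from \cite[Lemma 17]{bak-preusser}; the present paper contains no proof of it, so there is no internal argument to compare yours against. Judged on its own terms, your proof is correct and complete. The reduction of $B$-invariance to $\sigma^{*}\Phi\sigma=\Phi$, hence (using invertibility of $\sigma$) to $\Phi\sigma^{-1}=\sigma^{*}\Phi$, together with the entrywise translation via $\Phi_{i,-i}=\lambda^{(1-\epsilon(i))/2}$ for $i\in\Theta_{\hb}$, $\Phi_{00}=\mu$, and the relabelling $i\mapsto -i$, does reproduce exactly the four families of relations in (i). For (ii), the two points that genuinely need care are both handled: the quotient $\h/\Delta$ is a well-defined abelian group because $[\h,\h]\subseteq\Delta_{\min}$ (your commutator computation, which uses $\mu=\bar\mu\lambda$ and $\bar{\bar x}=\lambda x\bar\lambda$, checks out), and the passage from the basis columns $\sigma_{*j}=\sigma e_j$ to an arbitrary $u$ uses only the $\circ$-stability of $\Delta$, the identity $Q(ux)=Q(u)\circ x$, and the polarization formula $Q(u+v)\equiv Q(u)\+Q(v)\+(0,B(u,v))\bmod\Delta_{\min}$ of \cite[Lemma 12]{bak-preusser}, with the cross terms matched by the already-established invariance of $B$. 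This is the standard argument and, as far as one can tell, essentially the one carried out in the cited source.
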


\begin{lemma}\label{lemcol}
Let $\sigma\in \U_{2n+1}(R,\Delta)$. If $\sigma_{*j}=e_kx$ for some $j,k\in\Theta_{\hb}
$ and invertible $x\in R$, then $\sigma_{-k,*}=(e_{-j}\hat x)^t$ where $\hat x=\lambda^{(\epsilon(k)-1)/2}\bar x^{-1}\lambda^{(1-\epsilon(j))/2}$.
\end{lemma}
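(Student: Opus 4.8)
\emph{Plan.} This is purely a consequence of the defining identity $B(\sigma u,\sigma v)=B(u,v)$ together with the $\lambda$-Hermitian property of $B$; the condition on $Q$ is not needed. First I would rewrite the hypothesis $\sigma_{*j}=e_kx$ as $\sigma e_j=e_kx$. Applying unitarity with $u=e_j$ and an arbitrary $v\in M$, and then pulling the scalar $x$ out of the first slot via $B(e_kx,\sigma v)=\bar x\,B(e_k,\sigma v)$, one gets
\[
\bar x\,B(e_k,\sigma v)=B(\sigma e_j,\sigma v)=B(e_j,v)\qquad\text{for all }v\in M .
\]
Since $x$ is invertible, so is $\bar x$ (with inverse $\overline{x^{-1}}$), hence $B(e_k,\sigma v)=\bar x^{-1}B(e_j,v)$ for all $v\in M$.

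Next I would evaluate this on the standard basis, $v=e_m$ with $m\in\Theta$. From the explicit formula for $B$ one reads off that for any column $w\in M$ and any $\ell\in\Theta_{\hb}$ one has $B(e_\ell,w)=\lambda^{(1-\epsilon(\ell))/2}w_{-\ell}$, the middle term of $B$ dropping out because $(e_\ell)_0=0$. Applying this with $w=\sigma e_m=\sigma_{*m}$ gives $B(e_k,\sigma e_m)=\lambda^{(1-\epsilon(k))/2}\sigma_{-k,m}$, while $B(e_j,e_m)=\lambda^{(1-\epsilon(j))/2}\delta_{m,-j}$. Comparing the two sides and using that $\lambda$ is invertible, one obtains $\sigma_{-k,m}=0$ for every $m\in\Theta$ with $m\neq -j$ (in particular $\sigma_{-k,0}=0$), together with
\[
\sigma_{-k,-j}=\lambda^{-(1-\epsilon(k))/2}\,\bar x^{-1}\,\lambda^{(1-\epsilon(j))/2}=\lambda^{(\epsilon(k)-1)/2}\,\bar x^{-1}\,\lambda^{(1-\epsilon(j))/2}=\hat x ,
\]
which is exactly the claim $\sigma_{-k,*}=(e_{-j}\hat x)^t$.

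I do not expect a genuine obstacle here: the argument is a direct unpacking of the definitions, and the only point needing care is the bookkeeping of the half-integer exponents of $\lambda$ (and the identity $\bar\lambda=\lambda^{-1}$ from Remark~\ref{25}), which is precisely what turns $\bar x^{-1}$ into the twisted element $\hat x=\lambda^{(\epsilon(k)-1)/2}\bar x^{-1}\lambda^{(1-\epsilon(j))/2}$. As an alternative I could argue via Lemma~\ref{36}: the hypothesis forces the $k$-th column of $\sigma^{-1}$ to be $e_jx^{-1}$, and substituting the known values of $\sigma'_{ik}$ ($i\in\Theta_{\hb}$) and $\sigma'_{0k}$ into the formulas of Lemma~\ref{36}(i) recovers $\sigma_{-k,-i}$ and $\sigma_{-k,0}$; this also works, but is slightly messier because one must apply the inverse of the involution $\bar{}~$, so I would present the $B$-based computation above.
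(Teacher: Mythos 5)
Your argument is correct, and it takes a genuinely different (though closely related) route from the paper's. The paper starts from $e=\sigma^{-1}\sigma$: reading off the $(i,j)$ entry against the hypothesis $\sigma_{*j}=e_kx$ gives $\delta_{ij}=\sigma'_{ik}x$, i.e.\ the $k$-th column of $\sigma^{-1}$ is $e_jx^{-1}$, and then the relations of Lemma~\ref{36}(i) (namely $\sigma'_{ik}=\lambda^{-(\epsilon(i)+1)/2}\bar\sigma_{-k,-i}\lambda^{(\epsilon(k)+1)/2}$ and $\mu\sigma'_{0k}=\bar\sigma_{-k,0}\lambda^{(\epsilon(k)+1)/2}$) are inverted to recover the row $\sigma_{-k,*}$. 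That is precisely the ``alternative'' you sketch in your last sentence. Your primary computation instead bypasses Lemma~\ref{36} and rederives the relevant special case of it directly from the invariance $B(\sigma u,\sigma v)=B(u,v)$; the bookkeeping checks out, including over noncommutative $R$: $\bar x$ is invertible with inverse $\overline{x^{-1}}$, the formula $B(e_\ell,w)=\lambda^{(1-\epsilon(\ell))/2}w_{-\ell}$ for $\ell\in\Theta_{\hb}$ is right, the case $v=e_0$ correctly forces $\sigma_{-k,0}=0$, and the exponent arithmetic lands exactly on $\hat x=\lambda^{(\epsilon(k)-1)/2}\bar x^{-1}\lambda^{(1-\epsilon(j))/2}$. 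As for what each approach buys: the paper's proof is shorter on the page because Lemma~\ref{36} has already packaged the involution gymnastics, but it silently leaves to the reader the step of applying the inverse involution $y\mapsto\bar\lambda\bar y\lambda$ (Remark~\ref{25}(b)) to solve for $\sigma_{-k,-i}$; your version is self-contained, never needs to invert the involution, and only uses the $B$-condition in the definition of $\U_{2n+1}(R,\Delta)$, at the cost of reproving a fragment of Lemma~\ref{36}. Either proof is acceptable.
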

\begin{proof}
Since $e=\sigma^{-1}\sigma$, we have
\[\delta_{ij}=(\sigma^{-1}\sigma)_{ij}=\sigma'_{i*}\sigma_{*j}=\sigma'_{i*}e_kx=\sigma'_{ik}x\]
for any $i\in\Theta$. It follows from the previous lemma that $\sigma_{-k,*}=(e_{-j}\hat x)^t$.
\end{proof}

\subsection{The polarity map}
\begin{definition}
The map
\begin{align*}
\widetilde{}~: M&\longrightarrow M^*\\
u&\longmapsto \begin{pmatrix} \bar u_{-1}\lambda&\dots&\bar u_{-n}\lambda&\bar u_0\mu&\bar u_{n}&\dots&\bar u_1\end{pmatrix}
\end{align*}
where $M^*={}^{2n+1}\!R$ is called the {\it polarity map}. Clearly $~\widetilde{}~$ is {\it involutary linear}, i.e. $\widetilde{u+v}=\tilde u+\tilde v$ and $\widetilde{ux}=\bar x\tilde u$ for any $u,v\in M$ and $x\in R$.
\end{definition}
\begin{lemma}[{\cite[Lemma 16]{preusser_subnormal}}]\label{38}
If $\sigma\in \U_{2n+1}(R,\Delta)$ and $u\in M$, then $\widetilde{\sigma u}=\tilde u\sigma^{-1}$.
\end{lemma}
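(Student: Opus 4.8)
The plan is to identify the polarity map $~\widetilde{}~$ with the ``Gram pairing'' attached to $B$, after which the claim becomes an immediate restatement of the fact that $\sigma$ preserves $B$.

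First I would record the identity
\[\tilde u\, v = B(u,v)\qquad\text{for all }u,v\in M,\]
where on the left the row vector $\tilde u\in M^{*}={}^{2n+1}\!R$ is multiplied by the column vector $v\in M=R^{2n+1}$ in the usual way. Equivalently, if $G$ denotes the Gram matrix of $B$ (the matrix occurring in the definition of $B$), this says $\tilde u = u^{*}G$, where $u^{*}=\bar u^{t}$. Verifying this is purely mechanical bookkeeping: the coordinate reversal built into the definition of $~\widetilde{}~$ (the entry $u_{-1}$ occupies the first slot of $\tilde u$, the entry $u_{-n}$ the $n$-th slot, the entry $u_{0}$ the middle slot, and $u_{n},\dots,u_{1}$ the last $n$ slots) matches exactly the block‑antidiagonal shape of $G$, and the factors $\lambda$ and $\mu$ land in the same slots on both sides. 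This is the only computation in the proof, and it is a finite check against the coordinate formula for $B$.

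Next, since $\sigma\in\U_{2n+1}(R,\Delta)$ preserves $B$, we have $B(\sigma u,\sigma w)=B(u,w)$ for all $u,w\in M$; substituting $w=\sigma^{-1}v$ gives $B(\sigma u,v)=B(u,\sigma^{-1}v)$ for every $v\in M$. Combining this with the identity above,
\[\widetilde{\sigma u}\, v = B(\sigma u,v) = B(u,\sigma^{-1}v) = \tilde u\,(\sigma^{-1}v) = (\tilde u\,\sigma^{-1})\, v\]
for every $v\in M$. Taking $v=e_{j}$ for each $j\in\Theta$ reads off the $j$-th coordinate of each side, so $\widetilde{\sigma u}$ and $\tilde u\,\sigma^{-1}$ agree in all entries, i.e.\ $\widetilde{\sigma u}=\tilde u\,\sigma^{-1}$. (In matrix terms the identity $\tilde u=u^{*}G$ turns ``$\widetilde{\sigma u}=\tilde u\,\sigma^{-1}$ for all $u$'' into $\bar\sigma^{t}G\sigma=G$, which is precisely the matrix form of ``$\sigma$ preserves $B$''; the $Q$-part of the definition of the unitary group is not used at all.)

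There is no genuine obstacle here: the entire content is the identification $\tilde u=u^{*}G$ together with the defining relation of $\U_{2n+1}(R,\Delta)$. The only point requiring care is getting the index reversal and the placement of $\lambda$ and $\mu$ right in the first step.
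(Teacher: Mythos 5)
Your proof is correct: the identity $\tilde u\,v=B(u,v)$ does hold (the index reversal in the definition of $~\widetilde{}~$ matches the anti-diagonal Gram matrix of $B$ exactly), and the rest follows from $B$-invariance of $\sigma$ by associativity and evaluation at the basis vectors $e_j$. The paper only cites this lemma from \cite[Lemma 16]{preusser_subnormal} without reproducing a proof, but your argument is the standard one ($\tilde u=u^{*}G$ together with $\sigma^{*}G\sigma=G$) and is essentially what that reference does.
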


\subsection{The elementary subgroup}
We introduce the following notation. Let $(R,~\b{}~,\b{$\lambda$},\b{$\mu $})$ be the Hermitian ring defined in Remark \ref{25}(b) and $\h^{-1}$ the corresponding Heisenberg group. Note that the underlying set of both $\h$ and $\h^{-1}$ is $R\times R$. We denote the group operation (resp. scalar multiplication) of $\h$ by $\+_1$ (resp. $\circ_1$) and the group operation (resp. scalar multiplication) of $\h^{-1}$ by $\+_{-1}$ (resp. $\circ_{-1}$). Furthermore, we set $\Delta^1:=\Delta$ and $\Delta^{-1}:=\{(x,y)\in R\times R\mid(x,\bar y)\in \Delta\}$. One checks easily that $((R,~\b{}~,\b{$\lambda$},\b{$\mu $}),\Delta^{-1})$ is a Hermitian form ring. Analogously, if $(I,\Omega)$ is an odd form ideal of $(R,\Delta)$, we set $\Omega^1:=\Omega$ and $\Omega^{-1}:=\{(x,y)\in R\times R\mid(x,\bar y)\in \Omega\}$. One checks easily that $(I,\Omega^{-1})$ is an odd form ideal of $(R,\Delta^{-1})$. 
 
If $i,j\in \Theta$, let $e^{ij}$ denote the matrix in $M_{2n+1}(R)$ with $1$ in the $(i,j)$-th position and $0$ in all other positions.
\begin{definition}
If $i,j\in \Theta_{\hb}$, $i\neq\pm j$ and $x\in R$, the element  \[T_{ij}(x):=e+xe^{ij}-\lambda^{(\epsilon(j)-1)/2}\bar x\lambda^{(1-\epsilon(i))/2}e^{-j,-i}\] of $\U_{2n+1}(R,\Delta)$ is called an {\it (elementary) short root transvection}. 
If $i\in \Theta_{\hb}$ and $(x,y)\in \Delta^{-\epsilon(i)}$, the element \[T_{i}(x,y):=e+xe^{0,-i}-\lambda^{-(1+\epsilon(i))/2}\bar x\mu e^{i0}+ye^{i,-i}\] of $\U_{2n+1}(R,\Delta)$ is called an {\it (elementary) extra short root transvection}. The extra short root transvections of the kind \[T_{i}(0,y)=e+ye^{i,-i}\] are called {\it (elementary) long root transvections}. If an element of $\U_{2n+1}(R,\Delta)$ is a short or extra short root transvection, then it is called an {\it elementary transvection}. The subgroup of $\U_{2n+1}(R,\Delta)$ generated by all elementary transvections is called the {\it elementary subgroup} and is denoted by $\EU_{2n+1}(R,\Delta)$. 
\end{definition}

\begin{lemma}[{\cite[Lemma 20]{bak-preusser}}]\label{39}
The following relations hold for the elementary transvections.
\begin{align*}
&T_{ij}(x)=T_{-j,-i}(-\lambda^{(\epsilon(j)-1)/2}\bar x\lambda^{(1-\epsilon(i))/2}), \tag{S1}\\
&T_{ij}(x)T_{ij}(y)=T_{ij}(x+y), \tag{S2}\\
&[T_{ij}(x),T_{kl}(y)]=e \text{ if } k\neq j,-i \text{ and } l\neq i,-j, \tag{S3}\\
&[T_{ij}(x),T_{jk}(y)]=T_{ik}(xy) \text{ if } i\neq\pm k, \tag{S4}\\
&[T_{ij}(x),T_{j,-i}(y)]=T_{i}(0,xy-\lambda^{(-1-\epsilon(i))/2}\bar y\bar x\lambda^{(1-\epsilon(i))/2}), \tag{S5}\\
&T_{i}(x_1,y_1)T_{i}(x_2,y_2)=T_{i}((x_1,y_1)\+_{-\epsilon(i)}(x_2,y_2)), \tag{E1}\\
&[T_{i}(x_1,y_1),T_{j}(x_2,y_2)]=T_{i,-j}(-\lambda^{-(1+\epsilon(i))/2}\bar x_1\mu x_2) \text{ if } i\neq\pm j, \tag{E2}\\
&[T_{i}(x_1,y_1),T_{i}(x_2,y_2)]=T_{i}(0,-\lambda^{-(1+\epsilon(i))/2}(\bar x_1\mu x_2-\bar x_2\mu x_1)), \tag{E3}\\
&[T_{ij}(x),T_{k}(y,z)]=e \text{ if } k\neq j,-i \text{ and} \tag{SE1}\\
&[T_{ij}(x),T_{j}(y,z)]=T_{j,-i}(z\lambda^{(\epsilon(j)-1)/2}\bar x\lambda^{(1-\epsilon(i))/2})\cdot\\
&\hspace{3.4cm}\cdot T_{i}(y\lambda^{(\epsilon(j)-1)/2}\bar x\lambda^{(1-\epsilon(i))/2},xz\lambda^{(\epsilon(j)-1)/2}\bar x\lambda^{(1-\epsilon(i))/2}).\tag{SE2}\\
\end{align*}
\end{lemma}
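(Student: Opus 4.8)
The plan is a direct matrix computation, organised around the remark that every elementary transvection has the shape $e+N$ for a nilpotent $N\in M_{2n+1}(R)$ supported on at most three of the matrix positions $e^{pq}$. For a short root transvection one has $T_{ij}(x)=e+N$ with $N=xe^{ij}-\lambda^{(\epsilon(j)-1)/2}\bar x\lambda^{(1-\epsilon(i))/2}e^{-j,-i}$, and since $i\neq\pm j$ the two matrix units annihilate each other, so $N^2=0$ and $T_{ij}(x)^{-1}=e-N=T_{ij}(-x)$. For an extra short root transvection, $T_i(x,y)=e+N$ with $N=xe^{0,-i}-\lambda^{-(1+\epsilon(i))/2}\bar x\mu e^{i0}+ye^{i,-i}$; here $N^2=-\lambda^{-(1+\epsilon(i))/2}\bar x\mu x\,e^{i,-i}$ and $N^3=0$, so $T_i(x,y)^{-1}=e-N+N^2$, which one recognises as the extra short root transvection with parameter $\minus_{-\epsilon(i)}(x,y)$, the inverse of $(x,y)$ in the Heisenberg group $\h^{-\epsilon(i)}$ (for $i\in\Theta_-$ this is literally $(-x,-y-\bar x\mu x)$).

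With these normal forms in hand, each relation reduces to multiplying two matrices (for the additive relations) or four (for the commutators), collecting terms via $e^{pq}e^{rs}=\delta_{qr}e^{ps}$, and simplifying the ring coefficients using only $\bar{\bar x}=\lambda x\bar\lambda$, $\bar\lambda=\lambda^{-1}$, $\mu=\bar\mu\lambda$ and the bookkeeping identity $\overline{\lambda^{a}\bar x\lambda^{b}}=\lambda^{1-b}x\lambda^{-1-a}$. Thus (S1) follows by writing out the right-hand side and reading off its $(i,j)$- and $(-j,-i)$-entries; (S2) and (E1) come from $(e+N_1)(e+N_2)=e+N_1+N_2+N_1N_2$, where in (E1) the term $N_1N_2$ produces precisely the $-\bar x_1\mu x_2$ correction in the Heisenberg addition $\+_{-\epsilon(i)}$. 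The relations (S3) and (SE1) are immediate: under the stated index restrictions every matrix-unit product occurring in $[A,B]=ABA^{-1}B^{-1}$ vanishes, so $A$ and $B$ commute.

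For the productive commutator relations (S4), (S5), (E2), (E3), (SE2) one expands $ABA^{-1}B^{-1}$, discards the numerous vanishing cross terms, and is left with a single elementary transvection — or, in (SE2), a product $T_{j,-i}(\cdot)\,T_i(\cdot,\cdot)$ of a short and an extra short one — whose parameters can be read off from the surviving entries. A point that shortens the argument: the output of any such product is \emph{a priori} an element of $\EU_{2n+1}(R,\Delta)\leq\U_{2n+1}(R,\Delta)$, so once we know its matrix is of transvection shape, Lemma~\ref{36}(ii) forces the pair $(x,y)$ read off from it to lie in the correct relative odd form parameter $\Delta^{-\epsilon(\cdot)}$; hence we never verify parameter-membership by hand, and it is the explicit formula, not well-definedness, that is the content of each relation.

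The genuinely delicate case is (SE2): there the second factor $T_j(y,z)$ contributes an $N$ with $N^2\neq0$, so the expansion of $[T_{ij}(x),T_j(y,z)]$ has more surviving terms, and the main source of error is keeping the half-integer powers $\lambda^{(\epsilon(\cdot)\pm1)/2}$ and the order of the ring coefficients straight across the three index types ($+$, $-$, $0$) — in particular the bar-twist introduced whenever an index is negated. Once the surviving terms are grouped correctly they factor into the stated product. Apart from this the whole lemma is pure bookkeeping; it is recorded as \cite[Lemma~20]{bak-preusser}, so in the write-up I would reproduce the computation for one or two representative relations and leave the rest, or simply cite it.
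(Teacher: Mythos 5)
Your proposal is correct and is essentially the only proof there is: the paper itself gives no argument for this lemma but simply cites \cite[Lemma~20]{bak-preusser}, where the relations are verified by exactly the kind of direct matrix computation you describe (writing each transvection as $e+N$ with $N^2=0$ for short roots and $N^3=0$, $N^2=-\lambda^{-(1+\epsilon(i))/2}\bar x\mu x\,e^{i,-i}$ for extra short roots, and expanding the products of matrix units). Your additional observation that membership of the resulting parameters in $\Delta^{-\epsilon(\cdot)}$ follows automatically from Lemma~\ref{36}(ii) once the matrix is known to lie in $\U_{2n+1}(R,\Delta)$ and to have transvection shape is a valid and tidy shortcut.
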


\begin{definition}\label{defdiag}
Let $x\in R$ be invertible and $i,j\in\Theta_{\hb}$ such that $i\neq\pm j$. Define
\begin{align*}
D_{ij}(x):=&e+(x-1)e^{ii}+(x^{-1}-1)e^{jj}+(\lambda^{(\epsilon(i)-1)/2}\bar x^{-1}\lambda^{-(\epsilon(i)-1)/2}-1)e^{-i,-i}\\
&+(\lambda^{(\epsilon(j)-1)/2}\bar x\lambda^{-(\epsilon(j)-1)/2}-1)e^{-j,-j}\\
=&T_{ij}(x-1)T_{ji}(1)T_{ij}(x^{-1}-1)T_{ji}(-x)\in \EU_{2n+1}(R,\Delta).                                                                                                                                                                                                                                                                                                                                                                                                                                                                                             
\end{align*}
Clearly $(D_{ij}(x))^{-1}=D_{ij}(x^{-1})$. 
\end{definition}

\begin{definition}\label{defperm}
Let $i,j\in\Theta_{\hb}$ such that $i\neq\pm j$. Define
\begin{align*}
P_{ij}:=&e-e^{ii}-e^{jj}-e^{-i,-i}-e^{-j,-j}+e^{ij}-e^{ji}+\lambda^{(\epsilon(i)-\epsilon(j))/2}e^{-i,-j}-\lambda^{(\epsilon(j)-\epsilon(i))/2}e^{-j,-i}\\
=&T_{ij}(1)T_{ji}(-1)T_{ij}(1)\in \EU_{2n+1}(R,\Delta).                                                                                                                                                                                                                                                                                                                                                                                                                                                                                             
\end{align*}
Clearly $(P_{ij})^{-1}=P_{ji}$. 
\end{definition}

The two lemmas below are easy to check.
\begin{lemma}\label{lemdiag}
Let $i,j,k\in\Theta_{\hb}$ such that $i\neq \pm j$ and $k\neq \pm i,\pm j$. Let $a\in R$ be invertible, $x\in R$ and $(y,z)\in\Delta^{-\epsilon(i)}$. Then 
\begin{enumerate}[(i)]
\item $^{D_{ik}(a)
}T_{ij}(x)=T_{ij}(ax)$,
\item $^{D_{kj}(a)}T_{ij}(x)=T_{ij}(xa)$ and
\item $^{D_{-i,k}(a^{-1})}T_{i}(y,z)=T_{i}(ya,\lambda^{-(\epsilon(i)+1)/2}\bar a\lambda^{(\epsilon(i)+1)/2}za)$.
\end{enumerate}
\end{lemma}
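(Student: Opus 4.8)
\textbf{Proof proposal for Lemma \ref{lemdiag}.}

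The plan is to verify each of the three conjugation identities by direct matrix computation, treating $D_{ik}(a)$, $D_{kj}(a)$, $D_{-i,k}(a^{-1})$ and the various transvections simply as explicit elements of $\GL_{2n+1}(R)$ written in terms of the matrix units $e^{pq}$, and multiplying out. Recall from Definition \ref{defdiag} that $D_{pq}(a)$ acts on the standard basis by scaling $e_p$ by $a$, $e_q$ by $a^{-1}$, $e_{-p}$ by $\lambda^{(\epsilon(p)-1)/2}\bar a^{-1}\lambda^{-(\epsilon(p)-1)/2}$ and $e_{-q}$ by $\lambda^{(\epsilon(q)-1)/2}\bar a\lambda^{-(\epsilon(q)-1)/2}$, fixing all other basis vectors. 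So for (i): since $T_{ij}(x)=e+xe^{ij}-\lambda^{(\epsilon(j)-1)/2}\bar x\lambda^{(1-\epsilon(i))/2}e^{-j,-i}$, conjugating by $D_{ik}(a)$ (which scales $e_i$ by $a$ and $e_{-i}$ by $\lambda^{(\epsilon(i)-1)/2}\bar a^{-1}\lambda^{-(\epsilon(i)-1)/2}$, and leaves $e_j,e_k,e_{-j}$ untouched because $k\neq\pm i,\pm j$) multiplies the $e^{ij}$-coefficient $x$ on the left by $a$, giving $ax$, and multiplies the $e^{-j,-i}$-coefficient on the right by $\lambda^{(\epsilon(i)-1)/2}\bar a^{-1}\lambda^{-(\epsilon(i)-1)/2}$; one then checks the resulting matrix equals $T_{ij}(ax)$ by confirming that $\lambda^{(\epsilon(j)-1)/2}\overline{ax}\lambda^{(1-\epsilon(i))/2}=\lambda^{(\epsilon(j)-1)/2}\bar x\bar a\lambda^{(1-\epsilon(i))/2}$ matches $\lambda^{(\epsilon(j)-1)/2}\bar x\lambda^{(1-\epsilon(i))/2}\cdot\lambda^{(\epsilon(i)-1)/2}\bar a\lambda^{-(\epsilon(i)-1)/2}\cdot$(correction term), which collapses using $\lambda^{(1-\epsilon(i))/2}\lambda^{(\epsilon(i)-1)/2}=1$.

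Part (ii) is entirely analogous: conjugation by $D_{kj}(a)$ scales $e_j$ by $a^{-1}$ and $e_{-j}$ by $\lambda^{(\epsilon(j)-1)/2}\bar a\lambda^{-(\epsilon(j)-1)/2}$, leaving $e_i,e_{-i},e_k$ fixed, so the $e^{ij}$-coefficient gets multiplied on the right by $a$ (giving $xa$) and one verifies the correction term transforms compatibly, again using relation (S1). Part (iii) is the one requiring care: here $T_i(y,z)=e+ye^{0,-i}-\lambda^{-(1+\epsilon(i))/2}\bar y\mu e^{i0}+ze^{i,-i}$, and one conjugates by $D_{-i,k}(a^{-1})$, which scales $e_{-i}$ by $a^{-1}$, scales $e_i$ by $\lambda^{(\epsilon(-i)-1)/2}\bar a\lambda^{-(\epsilon(-i)-1)/2}=\lambda^{-(\epsilon(i)+1)/2}\bar a\lambda^{(\epsilon(i)+1)/2}$ (since $\epsilon(-i)=-\epsilon(i)$), and fixes $e_0,e_k,e_{-k}$. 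Tracking each of the three off-diagonal entries: the $e^{0,-i}$ entry $y$ becomes $ya$ (right multiplication by the $e_{-i}$-scalar $a$, the inverse of $a^{-1}$, with $e_0$-scalar trivial); the $e^{i,-i}$ entry $z$ becomes $\lambda^{-(\epsilon(i)+1)/2}\bar a\lambda^{(\epsilon(i)+1)/2}\cdot z\cdot a$ — wait, one must be careful which side each scalar lands on — the left index $i$ contributes $(\text{scalar on }e_i)^{-1}$ on the left and the right index $-i$ contributes $(\text{scalar on }e_{-i})^{-1}$ on the right. I will compute these conjugations as $^g(e+N)=e+gNg^{-1}$ and read off entries, then match against the claimed formula $T_i(ya,\lambda^{-(\epsilon(i)+1)/2}\bar a\lambda^{(\epsilon(i)+1)/2}za)$, and separately check the $e^{i0}$-entry transforms consistently (which it must, since both sides are genuine extra short root transvections and an extra short root transvection is determined by its $(0,-i)$ and $(i,-i)$ entries).

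The main obstacle is purely bookkeeping: keeping the powers of $\lambda$ straight, since $\lambda$ is a symmetry (not central, not an involution-fixed point) and the exponents $(\epsilon(\cdot)\pm 1)/2$ flip sign under $i\mapsto -i$; a secondary point is confirming that in case (iii) the transformed element genuinely lies in the appropriate $\Delta^{-\epsilon(i)}$, i.e. that $(ya,\lambda^{-(\epsilon(i)+1)/2}\bar a\lambda^{(\epsilon(i)+1)/2}za)=(y,z)\circ_{-\epsilon(i)}a$ up to the relevant identification — but this is immediate from the definition of the $R^\bullet$-module structure on $\h^{-\epsilon(i)}$ in Definition \ref{27} together with the description of $\Delta^{-1}$ preceding the definition of short root transvections, so no genuine difficulty arises. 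Since the statement is flagged as ``easy to check,'' I would in the final text simply record the key scalar identities $\lambda^{(1-\epsilon(i))/2}\lambda^{(\epsilon(i)-1)/2}=1$ and $\epsilon(-i)=-\epsilon(i)$ and invoke (S1) and (E1), leaving the elementary entry-by-entry comparison to the reader.
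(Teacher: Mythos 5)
Your proposal is correct: the paper gives no proof of this lemma at all (it is introduced with ``The two lemmas below are easy to check''), and your entry-by-entry conjugation computation, using that $D_{pq}(a)$ is diagonal so that the $(p,q)$-entry of $DMD^{-1}$ is $d_pM_{pq}d_q^{-1}$, is exactly the intended verification; all three final formulas you arrive at match the statement. The only slip is in your parenthetical in part (iii) claiming the left index contributes the \emph{inverse} of the scalar on $e_i$ --- it contributes the scalar itself, with the inverse appearing only on the right index --- but you immediately supersede this by computing ${}^g(e+N)=e+gNg^{-1}$ directly, and your stated results are the correct ones.
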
 

\begin{lemma}[{\cite[Lemma 23]{bak-preusser}}]\label{lemperm}
Let $i,j,k\in\Theta_{\hb}$ such that $i\neq \pm j$ and $k\neq \pm i,\pm j$. Let $x\in R$ and $(y,z)\in\Delta^{-\epsilon(i)}$. Then 
\begin{enumerate}[(i)]
\item $^{P_{ki}}T_{ij}(x)=T_{kj}(x)$,
\item $^{P_{kj}}T_{ij}(x)=T_{ik}(x)$ and
\item $^{P_{-k,-i}}T_{i}(y,z)=T_{k}(y,\lambda^{(\epsilon(i)-\epsilon(k))/2}z)$.
\end{enumerate}
\end{lemma}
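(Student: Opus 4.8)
The plan is to prove all three identities by a direct matrix computation, using that every matrix in the statement is written down explicitly --- in Definition \ref{defperm} and in the definitions of the elementary transvections.

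First I would record the action of $P_{ij}$ on the standard basis $(e_l)_{l\in\Theta}$: reading off the columns of the matrix in Definition \ref{defperm}, one sees that $P_{ij}$ fixes $e_0$ and every $e_l$ with $l\neq\pm i,\pm j$, while on the remaining four basis vectors it acts as a twisted transposition,
\[P_{ij}e_i=-e_j,\quad P_{ij}e_j=e_i,\quad P_{ij}e_{-i}=-e_{-j}\lambda^{(\epsilon(j)-\epsilon(i))/2},\quad P_{ij}e_{-j}=e_{-i}\lambda^{(\epsilon(i)-\epsilon(j))/2};\]
in particular $P_{ij}^{-1}=P_{ji}$ is obtained by interchanging $i$ and $j$. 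Similarly $T_{ij}(x)$ differs from $e$ only in the columns indexed by $j$ and $-i$ --- it sends $e_j\mapsto e_j+e_ix$ and $e_{-i}\mapsto e_{-i}-e_{-j}\lambda^{(\epsilon(j)-1)/2}\bar x\lambda^{(1-\epsilon(i))/2}$ and fixes the other basis vectors --- and $T_i(y,z)$ differs from $e$ only in the columns indexed by $-i$ and $0$.

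For (i) I would evaluate $P_{ki}T_{ij}(x)P_{ki}^{-1}=P_{ki}T_{ij}(x)P_{ik}$ on each basis vector and compare with $T_{kj}(x)$. On $e_l$ with $l\notin\{\pm i,\pm k,j\}$ all three factors act trivially; on $e_j$ and on $e_{\pm i},e_{\pm k}$ one pushes the vector through the three factors and checks that the signs cancel in pairs and that the exponents of $\lambda$ add up correctly (for instance $\lambda^{(1-\epsilon(i))/2}\lambda^{(\epsilon(i)-\epsilon(k))/2}=\lambda^{(1-\epsilon(k))/2}$), leaving precisely the action of $T_{kj}(x)$. Part (ii) is the same computation with the second index $j$ of $T_{ij}(x)$ relabelled instead of the first index $i$. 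For (iii) I would evaluate $P_{-k,-i}T_i(y,z)P_{-i,-k}$ on $e_0$ and on $e_{\pm i},e_{\pm k}$: since $P_{-k,-i}$ sends $e_{-i}\mapsto e_{-k}$ and $e_i\mapsto e_k\lambda^{(\epsilon(i)-\epsilon(k))/2}$, it relabels $\pm i$ as $\pm k$ throughout $T_i(y,z)$, the factor $\lambda^{(\epsilon(i)-\epsilon(k))/2}$ picked up on $e_i$ being exactly what turns the $e^{i,-i}$-coefficient $z$ into the $e^{k,-k}$-coefficient $\lambda^{(\epsilon(i)-\epsilon(k))/2}z$, while the $e^{i0}$-coefficient acquires the correct exponent of $\lambda$ by the identity $(\epsilon(i)-\epsilon(k))/2-(1+\epsilon(i))/2=-(1+\epsilon(k))/2$; the membership $(y,\lambda^{(\epsilon(i)-\epsilon(k))/2}z)\in\Delta^{-\epsilon(k)}$ then follows automatically, the left-hand side being a unitary matrix of the shape of $T_k(\cdot,\cdot)$.

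None of this is conceptually hard; the only thing that requires care --- and the place where the twist $\lambda^{(\epsilon(i)-\epsilon(k))/2}$ in (iii) originates --- is the bookkeeping of the signs and of the exponents of $\lambda$ during the conjugation. A purely group-theoretic alternative, avoiding matrices altogether, is to substitute $P_{ij}=T_{ij}(1)T_{ji}(-1)T_{ij}(1)$ and conjugate one factor at a time using the commutator relations of Lemma \ref{39}, whereupon the product telescopes to the stated transvection. (In any case this is \cite[Lemma 23]{bak-preusser}.)
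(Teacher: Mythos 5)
Your proposal is correct: the paper gives no proof of this lemma at all (it merely remarks that it is ``easy to check'' and cites \cite[Lemma 23]{bak-preusser}), and your direct matrix verification is precisely the routine check being alluded to. I confirmed the key points --- the action of $P_{ij}$ on the basis, the exponent identities $\lambda^{(1-\epsilon(i))/2}\lambda^{(\epsilon(i)-\epsilon(k))/2}=\lambda^{(1-\epsilon(k))/2}$ and $(\epsilon(i)-\epsilon(k))/2-(1+\epsilon(i))/2=-(1+\epsilon(k))/2$, and the membership $(y,\lambda^{(\epsilon(i)-\epsilon(k))/2}z)\in\Delta^{-\epsilon(k)}$ --- and they all hold as you state.
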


\subsection{Relative subgroups}
In this subsection $(I,\Omega)$ denotes an odd form ideal of $(R,\Delta)$.
\begin{definition}
A short root transvection $T_{ij}(x)$ is called {\it $(I,\Omega)$-elementary} if $x\in I$. An extra short root transvection $T_{i}(x,y)$ is called {\it $(I,\Omega)$-elementary} if $(x,y)\in \Omega^{-\epsilon(i)}$. 
The subgroup $\EU_{2n+1}(I,\Omega)$ of $\EU_{2n+1}(R,\Delta)$ generated by the $(I,\Omega)$-elementary trans\-vections is called the {\it preelementary subgroup of level $(I,\Omega)$}. Its normal closure $\EU_{2n+1}((R,\Delta),(I,\Omega))$ in $\EU_{2n+1}(R,\Delta)$ is called the {\it elementary subgroup of level $(I,\Omega)$}.
\end{definition}

If $\sigma\in M_{2n+1}(R)$, we call the matrix $(\sigma_{ij})_{i,j\in\Theta_{\hb}}\in M_{2n}(R)$ the {\it hyperbolic part} of $\sigma$ and denote it by $\sigma_{\hb}$. Furthermore, we define the submodule $M(R,\Delta):=\{u\in M\mid u_0\in J(\Delta)\}$ of $M$. 
\begin{definition}
The subgroup $\U_{2n+1}((R,\Delta),(I,\Omega)):=$
\[\{\sigma\in \U_{2n+1}(R,\Delta)\mid\sigma_{\hb}\equiv e_{\hb}\bmod  I\text{ and }Q(\sigma u)\equiv Q(u)\bmod \Omega~\forall u\in M(R,\Delta)\}\]
of $\U_{2n+1}(R,\Delta)$ is called {\it the principal congruence subgroup of level $(I,\Omega)$}.
\end{definition}

\begin{lemma}[{\cite[Lemma 28]{bak-preusser}}]\label{46}
Let $\sigma\in \U_{2n+1}(R,\Delta)$. Then $\sigma\in \U_{2n+1}((R,\Delta),(I,\Omega))$ iff Conditions (i) and (ii) below are satisfied. 
\begin{enumerate}[(i)]
\item $\sigma_{\hb}\equiv e_{\hb}\bmod  I$.
\item $Q(\sigma_{*j})\in\Omega~\forall j\in \Theta_{\hb}$ and $(Q(\sigma_{*0})\minus (1,0))\circ a\in\Omega~\forall a\in J(\Delta)$.
\end{enumerate}
\end{lemma}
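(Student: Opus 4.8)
The plan is to unwind the definition of the principal congruence subgroup. Its first defining clause is exactly condition (i), so everything reduces to showing that, for $\sigma\in\U_{2n+1}(R,\Delta)$ with $\sigma_{\hb}\equiv e_{\hb}\bmod I$, the second clause --- ``$Q(\sigma u)\equiv Q(u)\bmod\Omega$ for all $u\in M(R,\Delta)$'' --- is equivalent to condition (ii). I would begin with the elementary remarks that $0\in J(\Delta)$ (because $(0,0)\in\Delta_{\min}\subseteq\Delta$), so that $e_j\in M(R,\Delta)$ for every $j\in\Theta_{\hb}$ and $e_0a\in M(R,\Delta)$ for every $a\in J(\Delta)$, and that the defining formula for $Q$ gives $Q(e_j)=(0,0)$ for $j\in\Theta_{\hb}$ and $Q(e_0)=(1,0)$.

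For the ``only if'' implication one specialises $u$. Taking $u=e_j$ ($j\in\Theta_{\hb}$) and using $Q(\sigma e_j)=Q(\sigma_{*j})$ gives $Q(\sigma_{*j})\in\Omega$; taking $u=e_0a$ ($a\in J(\Delta)$) and using $Q(\sigma(e_0a))=Q(\sigma_{*0})\circ a$ and $Q(e_0a)=(1,0)\circ a$ gives $Q(\sigma_{*0})\circ a\equiv(1,0)\circ a\bmod\Omega$, which rewrites as $(Q(\sigma_{*0})\minus(1,0))\circ a\in\Omega$ since $\circ a$ is a group homomorphism. These are exactly (ii).

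For the ``if'' implication, assume (i) and (ii) and let $u=\sum_{j\in\Theta}e_ju_j\in M(R,\Delta)$ be arbitrary, so $u_0\in J(\Delta)$ and $\sigma u=\sum_{j\in\Theta}\sigma_{*j}u_j$. Fix the order $1,\dots,n,0,-n,\dots,-1$ on $\Theta$ and expand both $Q(\sigma u)$ and $Q(u)$ by peeling off one summand at a time via $Q(a+b)\equiv Q(a)\+ Q(b)\+(0,B(a,b))\bmod\Delta_{\min}$. The ``mixed'' contributions $(0,B(\sigma_{*j}u_j,\sigma_{*k}u_k))=(0,\bar u_jB(\sigma e_j,\sigma e_k)u_k)=(0,\bar u_jB(e_j,e_k)u_k)$, where the last step is the $\U$-invariance of $B$, agree with the mixed contributions of the $Q(u)$-expansion and cancel. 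For this order the accumulated $\Delta_{\min}$-error of the $Q(u)$-expansion vanishes, while each individual error term of the $Q(\sigma u)$-expansion has the shape $(0,z-\bar z\lambda)$ with $z\in I$ --- here (i) is used, since the chosen order prevents the identity part of $\sigma_{\hb}$ from contributing, so each such error is built from products of matrix entries at least one of which lies in $I$ --- hence all these errors lie in $\Omega^I_{\min}\subseteq\Omega$. The ``diagonal'' contributions $Q(\sigma_{*j})\circ u_j$ with $j\in\Theta_{\hb}$ lie in $\Omega$ by (ii) (as $\Omega$ is an $R^{\bullet}$-submodule), and $Q(\sigma_{*0})\circ u_0$ differs from the diagonal contribution $Q(e_0)\circ u_0=(u_0,0)$ of $Q(u)$ by $(Q(\sigma_{*0})\minus(1,0))\circ u_0\in\Omega$, using $u_0\in J(\Delta)$. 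Putting this together, $Q(\sigma u)\minus Q(u)$ is a $\+$-product of elements of $\Omega$ and one conjugate ${}^{(u_0,0)}A$ of an element $A\in\Omega$ by the Heisenberg element $(u_0,0)$; and such a conjugate lies in $\Omega$ again, because ${}^{(u_0,0)}(x,y)=(x,y)\+(0,\bar x\mu u_0-\bar u_0\mu x)$ and, since $\Omega\subseteq\Omega^I_{\max}=\Delta\cap(\tilde I\times I)$ forces $x\in\tilde I$, applying the involution to the defining relation of $\tilde I$ gives $\bar x\mu u_0\in I$ (and $\bar u_0\mu x\in I$ directly, as $u_0\in J(\Delta)$), so $(0,\bar x\mu u_0-\bar u_0\mu x)=(0,z-\bar z\lambda)$ with $z=\bar x\mu u_0\in I$ lies in $\Omega^I_{\min}\subseteq\Omega$. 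This yields $Q(\sigma u)\minus Q(u)\in\Omega$.

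The substance of the proof is the ``if'' direction, and within it two coupled points: the order of summation must be chosen so that the cross terms of the two expansions cancel \emph{and} the residual $\Delta_{\min}$-errors become $I$-divisible; and the precise form $\tilde I=\{x\mid\overline{J(\Delta)}\mu x\subseteq I\}$, together with $\Omega^I_{\min}\subseteq\Omega\subseteq\Omega^I_{\max}$, is needed to control the action of the Heisenberg elements $(u_0,0)$ ($u_0\in J(\Delta)$) on $\Omega$. This is where the non-abelianness of $\h$ and the nature of the relative parameters make the argument more than a formal manipulation; the forward direction and the matching of diagonal and cross terms are otherwise routine bookkeeping.
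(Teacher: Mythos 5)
Your argument is correct. Note, however, that the paper does not prove this statement at all: it is imported verbatim as \cite[Lemma 28]{bak-preusser}, so there is no internal proof to compare against, and what you have written is a genuine self-contained verification rather than an alternative route. On the merits, you have correctly isolated the two places where the claim is more than formal unwinding. First, the specialisations $u=e_j$ and $u=e_0a$ (rather than $u=e_0$, which need not lie in $M(R,\Delta)$ since $1\in J(\Delta)$ may fail) explain why condition (ii) carries the twist by $a\in J(\Delta)$. Second, in the converse direction the error terms of the expansion $Q(a+b)\equiv Q(a)\+Q(b)\+(0,B(a,b))\bmod\Delta_{\min}$ only lie in $\Delta_{\min}$, not in $\Omega$, so one really does need the order $1,\dots,n,0,-n,\dots,-1$: it makes the $Q(u)$-side errors vanish and forces each $Q(\sigma u)$-side error $(0,z-\bar z\lambda)$ to have $z$ a sum of products $\bar\sigma_{ik}\sigma_{-i,j}$ in which at least one factor is an off-diagonal hyperbolic entry, hence $z\in I$ by (i). Your treatment of the non-centrality of the $j=0$ term via ${}^{(u_0,0)}(x,y)=(x,y)\+(0,\bar x\mu u_0-\bar u_0\mu x)$, with $\bar x\mu u_0\in I$ obtained by applying the involution to $\overline{J(\Delta)}\mu x\subseteq I$, is exactly the point where the precise definitions of $\tilde I$ and $\Omega^I_{\max}$ are needed, and it checks out.
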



\begin{definition}
The subgroup $\NU_{2n+1}((R,\Delta),(I,\Omega)):=$
\[\Normaliser_{\U_{2n+1}(R,\Delta)}(\U_{2n+1}((R,\Delta),(I,\Omega)))\]
of $\U_{2n+1}(R,\Delta)$ is called the {\it normalised principal congruence subgroup of level $(I,\Omega)$}.
\end{definition}


\begin{definition}
The subgroup $\CU_{2n+1}((R,\Delta),(I,\Omega)):=$
\[ \{\sigma\in \NU_{2n+1}((R,\Delta),(I,\Omega))\mid [\sigma,\EU_{2n+1}(R,\Delta)]\leq \U_{2n+1}((R,\Delta),(I,\Omega))\}\]
of $\U_{2n+1}(R,\Delta)$ is called the {\it full congruence subgroup of level $(I,\Omega)$}.
\end{definition}


\subsection{The standard commutator formulas and the sandwich classification theorem}
We call the ring $R$ {\it quasifinite}, if it is a direct limit of subrings $R_i~(i\in \Phi)$ which are almost commutative (i.e. finitely generated as modules over their centers), involution invariant and contain $\lambda$ and $\mu$. 

\begin{theorem}[{\cite[Theorem 39]{bak-preusser}}]\label{SCF}
Suppose that $R$ is quasifinite and $n\geq 3$. Then $\EU_{2n+1}((R,\Delta),(I,\Omega))$ is a normal subgroup of $\NU_{2n+1}((R,\Delta),(I,\Omega))$ and the standard commutator formulas
\begin{align*}
&[\CU_{2n+1}((R,\Delta),(I,\Omega)),\EU_{2n+1}(R,\Delta)]\\
=&[\EU_{2n+1}((R,\Delta),(I,\Omega)),\EU_{2n+1}(R,\Delta)]\\
=&\EU_{2n+1}((R,\Delta),(I,\Omega))
\end{align*}
hold. In particular from the absolute case $(I,\Omega)=(R,\Delta)$, it follows that $\EU_{2n+1}(R,\Delta)$ is perfect and normal in $\U_{2n+1}(R,\Delta)$. 
\end{theorem}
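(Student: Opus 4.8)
\emph{Proof proposal.}\ Write $E:=\EU_{2n+1}(R,\Delta)$, $E':=\EU_{2n+1}((R,\Delta),(I,\Omega))$, $U':=\U_{2n+1}((R,\Delta),(I,\Omega))$, $N':=\NU_{2n+1}((R,\Delta),(I,\Omega))$ and $C':=\CU_{2n+1}((R,\Delta),(I,\Omega))$; recall the nesting $E'\subseteq U'\subseteq C'\subseteq N'$. The plan is to reduce all of the assertions to a single \emph{principal lemma} $(\star)$: for every $\sigma\in C'$ and every elementary transvection $t$ of $E$ one has $[\sigma,t]\in E'$. Granting $(\star)$, the rest is commutator calculus. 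As $E$ is generated by elementary transvections, the identity $[\sigma,g_1g_2]=[\sigma,g_1]\cdot{}^{g_1}[\sigma,g_2]$ together with $E'\trianglelefteq E$ (it is a normal closure in $E$ by definition) promotes $(\star)$ to $[C',E]\subseteq E'$. Conversely, every $(I,\Omega)$-elementary transvection lies in $[E',E]$: by Lemma~\ref{39} one has $T_{ij}(x)=[T_{ik}(x),T_{kj}(1)]$ with $x\in I$ and $k\neq\pm i,\pm j$, and the extra short (hence also long) root generators $T_i(x,y)$ with $(x,y)\in\Omega^{-\epsilon(i)}$ are handled in the same spirit using (S4), (S5), (SE2) and the conjugation formulas of Lemmas~\ref{lemdiag} and \ref{lemperm}; since $[E',E]\trianglelefteq E$, one representative of each type suffices, and $n\geq3$ supplies the auxiliary indices. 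Hence $E'=[E',E]\subseteq[C',E]\subseteq E'$, which is the chain of standard commutator formulas. Specialising $(I,\Omega)=(R,\Delta)$ gives $U'=N'=C'=\U_{2n+1}(R,\Delta)$ and $E'=E$, so the absolute case reads $[\U_{2n+1}(R,\Delta),E]=[E,E]=E$, i.e.\ $E$ is perfect and normal in $\U_{2n+1}(R,\Delta)$. Finally, $E'\trianglelefteq N'$: any $\sigma\in N'$ normalises $U'$ by definition, hence also $N'=\Normaliser_{\U_{2n+1}(R,\Delta)}(U')$ and $C'=\{\tau\in N'\mid [\tau,E]\subseteq U'\}$, and, being an element of $\U_{2n+1}(R,\Delta)$, it normalises $E$ by the absolute case; therefore ${}^{\sigma}E'={}^{\sigma}[C',E]=[{}^{\sigma}C',{}^{\sigma}E]=[C',E]=E'$.

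To prove $(\star)$ I would proceed in two stages. \emph{Stage A} (relative dilation): if $\sigma\in U'$, so that $\sigma_{\hb}\equiv e_{\hb}\bmod I$ and $Q(\sigma_{*j})\in\Omega$ by Lemma~\ref{46}, and $t$ is an elementary transvection, then $[\sigma,t]\in E'$; consequently $E$ normalises $U'$ and $E'\trianglelefteq U'$. This is a direct matrix computation: one conjugates the explicit matrices of $T_{ij}(x)$ and $T_i(x,y)$ by $\sigma$, reads off from the congruence conditions on $\sigma$ that the hyperbolic off-diagonal entries of the result land in $I$ and its $Q$-values land in $\Omega$, and then rewrites the matrix as a word in $(I,\Omega)$-elementary transvections via a \emph{generation lemma} for $E'$ --- namely that $E'$ is generated by the one-step conjugates ${}^{T_{ji}(a)}T_{ij}(x)$ ($x\in I$, $a\in R$) together with suitable $D$- and $P$-conjugates of the $(I,\Omega)$-elementary extra short root transvections --- which is itself proved by an induction on word length using Lemmas~\ref{39}, \ref{lemdiag}, \ref{lemperm} and Definitions~\ref{defdiag} and \ref{defperm}. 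The hypothesis $n\geq3$ is used throughout to split commutators and to isolate single entries through an auxiliary index. \emph{Stage B} upgrades the conclusion from $\sigma\in U'$ (principal congruence subgroup) to $\sigma\in C'$ (full congruence subgroup). The defining property of $C'$ only yields $[\sigma,t]\in U'$, and an element of $U'$ need not lie in $E'$, so one must exploit that $[\sigma,t]$ is a commutator: by the Hall--Witt identity together with the perfectness of $E$ one writes $t$ as a product of commutators of elementary transvections, which reorganises $[\sigma,t]$ into $E$-conjugates of terms $[[\sigma,b],c]$ with $b,c$ elementary, and since $[\sigma,b]\in U'$ Stage A applies to those. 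Closing the remaining gap is where the hypothesis that $R$ be \emph{quasifinite} is essential: one first reduces to $R$ almost commutative by writing $R=\varinjlim R_i$ as a filtered colimit of almost commutative, involution-invariant subrings containing $\lambda$ and $\mu$ (all the relations involved being finitary), and then runs Bak's localisation--completion argument over the Noetherian centre of $R$, establishing the statement over the semilocal localisations $R_{\mathfrak{m}}$, where the obstruction disappears, and patching the local conclusions back together.

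I expect the main obstacle to be Stage B, and within it the localisation step: in contrast to the finite matrix identities of Stage A, it genuinely uses the Noetherian structure of $R$ to annihilate the ``relative $K_1$'' obstruction $U'/E'$ that separates the principal congruence subgroup from the relative elementary subgroup. A second, more mechanical but unavoidable difficulty, specific to the odd-dimensional setting, is the bookkeeping: there are three families of root elements --- short root $T_{ij}$, extra short root $T_i(x,y)$ and long root $T_i(0,y)$ --- the form data is valued in the Heisenberg group $\h$ with its twisted addition $\+$, and the Stage A computations must be carried out case by case while tracking the $\lambda$- and $\mu$-twists in the relations (S1)--(SE2) and in the $D_{ij}$ and $P_{ij}$ formulas. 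Once $(\star)$ is available, everything else is formal.
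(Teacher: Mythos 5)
First, a remark on the comparison itself: the paper does not prove this statement at all. Theorem~\ref{SCF} is imported verbatim from \cite[Theorem 39]{bak-preusser}, so there is no internal argument to measure yours against. Your sketch does assemble the correct global skeleton of the argument in that source: relative perfectness $E'=[E',E]$ via the relations of Lemma~\ref{39}, promotion of a principal lemma through $[\sigma,g_1g_2]=[\sigma,g_1]\,{}^{g_1}[\sigma,g_2]$, a Hall--Witt reduction from $C'$ to $U'$, reduction to almost commutative subrings via the quasifinite hypothesis, and Bak's localisation--completion over the Noetherian centre.

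There is, however, a genuine misplacement of the difficulty which, as written, leaves the central step unproved. Your Stage~A asserts that $[\sigma,t]\in E'$ for $\sigma\in U'$ and $t$ elementary is ``a direct matrix computation'' followed by an appeal to a generation lemma. It is not: conjugating $t$ by $\sigma$ and reading off the congruence conditions of Lemma~\ref{46} only shows that $[\sigma,t]$ again satisfies those conditions, i.e.\ that $[\sigma,t]\in U'$; rewriting the resulting matrix as a word in $(I,\Omega)$-elementary transvections is precisely the relative $K_1$-type obstruction $U'/E'$ that you yourself name in Stage~B. (The generation lemma you invoke --- that $E'$ is generated by one-step conjugates ${}^{T_{ji}(a)}T_{ij}(x)$ and their extra short analogues --- is true and needed, but it does not by itself express $\sigma t\sigma^{-1}t^{-1}$ in those generators for an arbitrary $\sigma\in U'$.) If Stage~A really were a finite matrix identity, the quasifinite hypothesis would be superfluous and the commutator formula would hold over arbitrary form rings, which is known to fail already in the linear case. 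The localisation--completion machinery must therefore be deployed at Stage~A, to establish $[U',E]\subseteq E'$; your Stage~B --- passing from $U'$ to $C'$ --- is by contrast the formal part, since $[C',E]\subseteq U'$ holds by the very definition of $\CU_{2n+1}((R,\Delta),(I,\Omega))$ and Hall--Witt plus perfectness of $E$ then reduces everything to Stage~A. The architecture is salvageable by relocating the hard input, but as stated the key inclusion is not proved.
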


\begin{theorem}[{\cite[Theorem 80]{bak-preusser}}]\label{SCT}
Suppose that $R$ is quasifinite and $n\geq 3$. Let $H$ be a subgroup of $\U_{2n+1}(R,\Delta)$. Then $H$ is normalised by $\EU_{2n+1}(R,\Delta)$ if and only if there is an odd form ideal $(I,\Omega)$ of $(R,\Delta)$ such that 
\[\EU_{2n+1}((R,\Delta),(I,\Omega))\subseteq H \subseteq \CU_{2n+1}((R,\Delta),(I,\Omega)).\]
Moreover, $(I,\Omega)$ is uniquely determined.
\end{theorem}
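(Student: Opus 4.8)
The plan is to treat the two implications separately. The ``if'' direction follows at once from Theorem~\ref{SCF}: if $\EU_{2n+1}((R,\Delta),(I,\Omega))\subseteq H\subseteq\CU_{2n+1}((R,\Delta),(I,\Omega))$, then for $g\in E:=\EU_{2n+1}(R,\Delta)$ and $h\in H$ one has ${}^{g}h=[g,h]\,h$ with $[g,h]\in[\CU_{2n+1}((R,\Delta),(I,\Omega)),E]=\EU_{2n+1}((R,\Delta),(I,\Omega))\subseteq H$, so ${}^{g}h\in H$; running the same argument with $g^{-1}$ in place of $g$ shows that $E$ normalises $H$.

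For the converse I would first attach to $H$ a canonical odd form ideal $(I,\Omega)$. Let $I$ be the ideal of $R$ generated by $\{\sigma_{ij}-\delta_{ij}\mid\sigma\in H,\ i,j\in\Theta_{\hb}\}$; it is involution invariant because $H=H^{-1}$ and Lemma~\ref{36}(i) expresses the entries of $\sigma^{-1}$ as $\lambda$-twisted conjugate transposes of those of $\sigma$. Let $\Omega$ be the $R^{\bullet}$-submodule of $\h$ generated by $\Omega^{I}_{\min}$ together with all $Q(\sigma_{*j})$ ($\sigma\in H$, $j\in\Theta_{\hb}$) and all $(Q(\sigma_{*0})\minus(1,0))\circ a$ ($\sigma\in H$, $a\in J(\Delta)$); a routine check using Lemma~\ref{36} and the definitions of $\tilde I$ and $\Omega^{I}_{\max}=\Delta\cap(\tilde I\times I)$ shows that $(I,\Omega)$ is an odd form ideal. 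By construction every $\sigma\in H$ satisfies Conditions (i) and (ii) of Lemma~\ref{46}, so $H\subseteq\U_{2n+1}((R,\Delta),(I,\Omega))$ and hence $H\subseteq\NU_{2n+1}((R,\Delta),(I,\Omega))$; and because $H$ is normalised by $E$ one has $[\sigma,E]\subseteq H\subseteq\U_{2n+1}((R,\Delta),(I,\Omega))$ for every $\sigma\in H$, which gives $H\subseteq\CU_{2n+1}((R,\Delta),(I,\Omega))$. That settles the upper inclusion.

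The real work is the lower inclusion $\EU_{2n+1}((R,\Delta),(I,\Omega))\subseteq H$. Since that group is the $E$-normal closure of the $(I,\Omega)$-elementary transvections and $H$ is normalised by $E$, it suffices to prove that $T_{ij}(x)\in H$ for every $x\in I$ and $T_{i}(x,y)\in H$ for every $(x,y)\in\Omega^{-\epsilon(i)}$. The key is an extraction lemma: given $\sigma\in H$ and an off-diagonal entry $a=\sigma_{kl}$, one forms commutators $[\sigma,g]$ with $g\in E$ built from short, long and extra short root transvections chosen to single out the $(k,l)$-entry, and shows that suitable products of these, lying in the $E$-normal closure of $\sigma$ and hence in $H$, realise elementary transvections whose arguments exhaust everything generated by $a$; the relations (S1)--(S5), (E1)--(E3), (SE1)--(SE2) of Lemma~\ref{39}, together with the transitivity supplied by the matrices $D_{ij}$ and $P_{ij}$ of Lemmas~\ref{lemdiag} and~\ref{lemperm}, then assemble from these all the $(I,\Omega)$-elementary transvections. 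The hypothesis $n\geq 3$ enters here to provide a spare hyperbolic index for parking intermediate factors, and quasifiniteness is used to pass, via a direct-limit and localisation argument, to an almost-commutative --- and ultimately semilocal --- ring where the computation can actually be carried through. The extra short transvections, which carry Heisenberg-group data, require the most delicate case analysis, and I expect this extraction step to be the main obstacle of the whole proof.

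Finally, for uniqueness, suppose $(I',\Omega')$ is any odd form ideal with $\EU_{2n+1}((R,\Delta),(I',\Omega'))\subseteq H\subseteq\CU_{2n+1}((R,\Delta),(I',\Omega'))$. On the one hand, the $(I',\Omega')$-elementary transvections lie in $\EU_{2n+1}((R,\Delta),(I',\Omega'))\subseteq H\subseteq\U_{2n+1}((R,\Delta),(I,\Omega))$, so Lemma~\ref{46} forces $I'\subseteq I$ and $\Omega'\subseteq\Omega$. On the other hand, for $x\in I$ the transvection $T_{ij}(x)$ lies in $\EU_{2n+1}((R,\Delta),(I,\Omega))\subseteq H\subseteq\CU_{2n+1}((R,\Delta),(I',\Omega'))$, so choosing a third index $k$ (available since $n\geq 3$) one gets $T_{ik}(x)=[T_{ij}(x),T_{jk}(1)]\in\U_{2n+1}((R,\Delta),(I',\Omega'))$ by (S4), whence $x\in I'$ by Lemma~\ref{46}; an analogous extraction via (SE2) gives $\Omega\subseteq\Omega'$. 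Hence $(I',\Omega')=(I,\Omega)$, which yields both existence and uniqueness and completes the proof.
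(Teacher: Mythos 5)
The paper does not prove this theorem at all: it is imported verbatim from \cite[Theorem 80]{bak-preusser}, so there is no in-paper argument to compare yours against. On its own terms, your write-up correctly disposes of the ``if'' direction via Theorem \ref{SCF} and correctly identifies the shape of the uniqueness argument, but the entire mathematical content of the theorem is the lower inclusion $\EU_{2n+1}((R,\Delta),(I,\Omega))\subseteq H$, and there your text is a description of what a proof would have to do (``one forms commutators \dots\ and shows that suitable products \dots\ realise elementary transvections''), not a proof. In \cite{bak-preusser} this extraction step occupies most of a long paper --- reduction to almost commutative rings, localisation, and a lengthy case analysis, especially for the extra short root elements carrying Heisenberg data --- and nothing in your outline substitutes for it. As it stands the proposal is a plan with the hard part deferred.

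There is also a concrete error in your construction of the level. You generate $I$ by all $\sigma_{ij}-\delta_{ij}$ with $i,j\in\Theta_{\hb}$, hence in particular by the diagonal terms $\sigma_{ii}-1$, and you then assert $H\subseteq\U_{2n+1}((R,\Delta),(I,\Omega))$. Compare with the paper's own Lemma \ref{lemlev}: the level of the subgroup generated by a conjugacy class is generated by the off-diagonal entries $\sigma_{ij}$ ($i\neq j$), the \emph{differences} $\sigma_{ii}-\sigma_{jj}$, and the $0$-row and $0$-column data weighted by $J(\Delta)$ and $\mu$; the terms $\sigma_{ii}-1$ do not occur. The discrepancy is essential: an element of $G$ whose hyperbolic part is a homothety $y\cdot e_{\hb}$ with $y\neq 1$ centralises all short root transvections, so the group it generates together with its $E$-conjugates can have level with $I$ much smaller than the ideal generated by $y-1$; for such an $H$ your $(I,\Omega)$ is strictly too large and the required inclusion $\EU_{2n+1}((R,\Delta),(I,\Omega))\subseteq H$ fails. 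Relatedly, for the correct level one cannot expect $H\subseteq\U_{2n+1}((R,\Delta),(I,\Omega))$ at all (homothety-type elements are not congruent to $e$ modulo a small $I$); the statement one proves is only $[H,E]\subseteq\U_{2n+1}((R,\Delta),(I,\Omega))$, i.e.\ $H\subseteq\CU_{2n+1}((R,\Delta),(I,\Omega))$, so that intermediate claim needs to be removed rather than repaired.
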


Recall that if $H$ is a subgroup of $\U_{2n+1}(R,\Delta)$ normalised by $\EU_{2n+1}(R,\Delta)$, then the uniquely determined odd form ideal $(I,\Omega)$ in Theorem \ref{SCT} is called the level of $H$.

\section{Elementary covering numbers in $\U_{2n+1}(K,\Delta)$}
In this section $n\geq 3$ denotes an integer and $(K,\Delta)$ a Hermitian form field (i.e. $(K,\Delta)$ is a Hermitian form ring and $K$ a field). We denote the odd unitary group $\U_{2n+1}(K,\Delta)$ by $G$ and its elementary subgroup $\EU_{2n+1}(K,\Delta)$ by $E$.

If $\mu\neq 0$ or $K\times 0\not\subseteq \Delta$, then there are only two odd form ideals in $(K,\Delta)$, namely $(0,0)$ and $(K,\Delta)$. If $\mu=0$ and $K\times 0\subseteq \Delta$, then there is a third odd form ideal, namely $(0,K\times 0)$. It follows from \cite[Remark 26]{bak-preusser} that $\NU_{2n+1}((K,\Delta),(I,\Omega))=\U_{2n+1}(K,\Delta)$ for any odd form ideal $(I,\Omega)$. Hence $\EU_{2n+1}((K,\Delta),(I,\Omega))$, $\U_{2n+1}((K,\Delta),(I,\Omega))$ and $\CU_{2n+1}((K,\Delta),(I,\Omega))$ are normal subgroups of $\U_{2n+1}(K,\Delta)$. 

Recall that the level of a conjugacy class $C$ in $G$ is the level of the subgroup generated by $C$. In Subsection 4.1 we investigate covering numbers with respect to conjugacy classes of level $(K,\Delta)$. In Subsection 4.2 we investigate covering numbers with respect to conjugacy classes of level $(0,K\times 0)$. 

\begin{lemma}\label{lemlev}
Let $C$ be a conjugacy class in $G$ and $\sigma$ an element of $C$. Set
\[Y:=\{\sigma_{ij},\sigma_{ii}-\sigma_{jj}, \sigma_{i0}J(\Delta),\overline{J(\Delta)}\mu\sigma_{0j},\overline{J(\Delta)}\mu(\sigma_{00}-\sigma_{jj})J(\Delta)\mid i,j\in \Theta_{\hb},i\neq j\}\]
and 
\[Z:=\{Q(\sigma_{*j}),(Q(\sigma_{*0})\minus(1,0))\circ y\+(y,z)\minus (y,z)\circ \sigma_{ii}\mid i,j\in \Theta_{\hb}, (y,z)\in \Delta
\}.\]
Let $I$ be the involution invariant ideal generated by $Y$ and set $\Omega:=\Omega_{\min}^I\+
Z\circ K$. Then $(I,\Omega)$ is the level of $C$.
\end{lemma}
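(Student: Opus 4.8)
The plan is to invoke the uniqueness part of the Sandwich Classification Theorem \ref{SCT}. Since $C$ is a conjugacy class, $H:=\langle C\rangle$ is a normal subgroup of $G$, hence normalised by $E$, so there is exactly one odd form ideal $(I_0,\Omega_0)$ with $\EU_{2n+1}((K,\Delta),(I_0,\Omega_0))\subseteq H\subseteq\CU_{2n+1}((K,\Delta),(I_0,\Omega_0))$, and $(I_0,\Omega_0)$ is by definition the level of $C$. Hence it suffices to prove that the explicit pair $(I,\Omega)$ from the statement is an odd form ideal and satisfies
\[\EU_{2n+1}((K,\Delta),(I,\Omega))\subseteq H\subseteq\CU_{2n+1}((K,\Delta),(I,\Omega)).\]
That $(I,\Omega)$ is an odd form ideal (i.e.\ $\Omega^I_{\min}\subseteq\Omega\subseteq\Omega^I_{\max}$) I would check directly, using that $Q(\sigma_{*j})\in\Delta$ for $j\in\Theta_{\hb}$ and $Q(\sigma_{*0})\minus(1,0)\in\Delta$ by Lemma \ref{36}(ii) together with the basic identities for $Q$ and $B$. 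Since $K$ is a field, $I\in\{0,K\}$, which simplifies the endgame considerably, but I expect the argument to be presented uniformly.

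For the inclusion $H\subseteq\CU_{2n+1}((K,\Delta),(I,\Omega))$: as this subgroup is normal in $G$ (recall $\NU_{2n+1}((K,\Delta),(I,\Omega))=G$), it is enough to show $\sigma\in\CU_{2n+1}((K,\Delta),(I,\Omega))$, i.e.\ $[\sigma,w]\in\U_{2n+1}((K,\Delta),(I,\Omega))$ for all $w\in E$; by induction on the word length of $w$ (and using that $\U_{2n+1}((K,\Delta),(I,\Omega))$ is normal in $G$) this reduces to the case where $w$ is an elementary transvection. I would then write out $[\sigma,T_{ij}(x)]$ and $[\sigma,T_i(x,y)]$ as explicit matrices and verify the two conditions of Lemma \ref{46}: every entry of the hyperbolic part of $[\sigma,w]-e$ is a $K$-combination of the elements of $Y$ — the off-diagonal entries $\sigma_{kl}$ and the differences $\sigma_{kk}-\sigma_{ll}$ occur because the diagonal part of a commutator behaves multiplicatively, and for $w=T_i(x,y)$ the scalar $x$ ranges over $J(\Delta)$, which produces the terms $\sigma_{i0}J(\Delta)$ and $\overline{J(\Delta)}\mu\sigma_{0j}$ — so it lies in $I$; and the form data $Q([\sigma,w]_{*j})$ and $(Q([\sigma,w]_{*0})\minus(1,0))\circ a$ for $a\in J(\Delta)$ are assembled from the elements of $Z$, hence lie in $\Omega=\Omega^I_{\min}\+Z\circ K$.

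For the reverse inclusion $\EU_{2n+1}((K,\Delta),(I,\Omega))\subseteq H$: since $H$ is normalised by $E$ and $\EU_{2n+1}((K,\Delta),(I,\Omega))$ is the normal closure in $E$ of the preelementary subgroup $\EU_{2n+1}(I,\Omega)$, it suffices to show that $H$ contains every $(I,\Omega)$-elementary transvection. From $\sigma\in H\trianglelefteq G$ we know that $H$ contains all commutators ${}^{g}[\sigma^{\pm 1},\tau]$ ($g\in G$, $\tau$ elementary) and all their products. Using the relations (S1)--(SE2), the conjugation formulas of Lemmas \ref{lemdiag} and \ref{lemperm}, and Lemma \ref{lemcol}, I would extract from suitable iterated commutators a transvection $T_{pq}(w)$ for each generator $w$ of $I$ taken from $Y$, and an extra short root transvection realising each generating element of $\Omega$ coming from $Z$; the part of $\Omega$ contributed by $\Omega^I_{\min}$ is then reached via relation (S5) and Lemma \ref{lemdiag}(iii) from the transvections $T_{ij}(x)$, $x\in I$, already obtained. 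Closing up under $K$-scaling (relation (S2) and Lemma \ref{lemdiag}), under the bar-involution (relation (S1)), and under change of index pair (Lemma \ref{lemperm}) then yields all $(I,\Omega)$-elementary transvections. Combining the two inclusions with the uniqueness in Theorem \ref{SCT} identifies $(I,\Omega)$ with the level of $C$.

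The matrix computation in the second step is routine but lengthy; the genuinely delicate step, which I expect to be the main obstacle, is the extraction in the third step — choosing commutators of $\sigma$ and its conjugates with elementary transvections that isolate a \emph{single} root transvection carrying exactly the prescribed coefficient together with the correct $\lambda$- and $\mu$-twists, and in particular producing the mixed form-parameter element $(Q(\sigma_{*0})\minus(1,0))\circ y\+(y,z)\minus(y,z)\circ\sigma_{ii}$, which couples the $0$-th column of $\sigma$, an arbitrary $(y,z)\in\Delta$, and the diagonal entry $\sigma_{ii}$.
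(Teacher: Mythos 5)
Your reduction via the uniqueness part of Theorem \ref{SCT} is sound, and your plan for the inclusion $\langle C\rangle\subseteq\CU_{2n+1}((K,\Delta),(I,\Omega))$ (normality of $\CU$ in $G$, induction on word length, then checking the two conditions of Lemma \ref{46} on $[\sigma,T_{ij}(x)]$ and $[\sigma,T_i(x,y)]$) is the expected routine computation. The gap is in your third step. Establishing $\EU_{2n+1}((K,\Delta),(I,\Omega))\subseteq\langle C\rangle$ by extracting, from iterated commutators of conjugates of $\sigma^{\pm1}$ with elementary transvections, a \emph{single} root transvection whose parameter is a prescribed generator of $I$ or of $\Omega$ is not a step that can be waved through: it is precisely the extraction/``decomposition of unipotents'' problem, and indeed the entire technical content of Section 4 of this paper (Theorems \ref{thmm1}--\ref{thmm3}) consists of carrying out such extractions in special situations. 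You correctly flag this as the main obstacle, but you supply no commutators and no mechanism for isolating, say, $T_{pq}(\sigma_{ij})$ or an extra short root transvection realising $(Q(\sigma_{*0})\minus(1,0))\circ y\+(y,z)\minus(y,z)\circ\sigma_{ii}$. As written, the lower-bound half of the proof is a statement of intent.

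Moreover, the extraction is unnecessary. The paper instead observes that the level of $C$ is the \emph{smallest} odd form ideal $(J,\Sigma)$ with $\langle C\rangle\subseteq\CU_{2n+1}((K,\Delta),(J,\Sigma))$ (this follows from Theorem \ref{SCT}, since $\EU_{2n+1}((K,\Delta),(I_0,\Omega_0))\subseteq\langle C\rangle\subseteq\CU_{2n+1}((K,\Delta),(J,\Sigma))$ forces $(I_0,\Omega_0)$ to be smaller than $(J,\Sigma)$), and then cites \cite[Lemma 31]{preusser_subnormal}, which identifies the explicit pair $(I,\Omega)$ of the statement as that smallest odd form ideal. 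The point you are missing is that minimality is your second step read in the opposite direction: if $[\sigma,T_{ij}(x)]$ and $[\sigma,T_i(y,z)]$ lie in $\U_{2n+1}((K,\Delta),(J,\Sigma))$ for all elementary transvections, then the conditions of Lemma \ref{46} force $Y\subseteq J$ and $Z\subseteq\Sigma$, hence $(I,\Omega)$ is smaller than $(J,\Sigma)$ --- no transvection ever needs to be exhibited inside $\langle C\rangle$. Replacing your third step by this minimality argument closes the proof; keeping the extraction route leaves it open at its hardest point.
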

\begin{proof}
If $(J,\Sigma)$ and $(J',\Sigma')$ are odd form ideals, then we call $(J,\Sigma)$ {\it smaller} than $(J',\Sigma')$ if $J\subseteq J'$ and $\Sigma\subseteq \Sigma'$.
It order to prove the assertion of the lemma, it suffices to show that $(I,\Omega)$ is the smallest odd form ideal such that $\langle C\rangle\subseteq \CU_{2n+1}((R,\Delta),(I,\Omega))$. But that follows from \cite[Lemma 31]{preusser_subnormal}.
\end{proof}

\subsection{Elementary covering numbers with respect to conjugacy classes of level $(K,\Delta)$}

We denote by $\mathcal{C}$ the set of all conjugacy classes of level $(K,\Delta)$, by $S_{\short}$ the set of all nontrivial short root transvections and by $S_{\extra}$ the set of all nontrivial extra short root transvections. We will prove that $\scn_{\mathcal{C}}(S_{\short})\leq 4$ and $\scn_{\mathcal{C}}(S_{\extra})\leq 12$. In order to do that we need three lemmas.

\begin{lemma}\label{lem1}
Any two elements of $S_{\short}$ are conjugated. 
\end{lemma}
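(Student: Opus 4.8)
\textbf{Proof proposal for Lemma \ref{lem1}.} The plan is to show that every nontrivial short root transvection $T_{ij}(x)$ ($i,j\in\Theta_{\hb}$, $i\neq\pm j$, $0\neq x\in K$) is conjugate in $G$ to a single fixed one, say $T_{12}(1)$, and then conjugacy of any two such transvections follows by transitivity. First I would reduce the dependence on the pair of indices. Using Lemma \ref{lemperm}, conjugation by suitable products of the permutation matrices $P_{kl}\in E$ sends $T_{ij}(x)$ to $T_{12}(x')$ for some nonzero $x'$; more precisely, $^{P_{1i}}T_{ij}(x)=T_{1j}(x)$ (for $j\neq 1$, using $n\geq 3$ to pick an admissible index) and then $^{P_{2j}}T_{1j}(x)=T_{12}(x)$, possibly after a further relabelling when $j=2$ or $i=2$; one must also handle the case where $i$ or $j$ is negative, which costs at most one more permutation. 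So it remains to conjugate $T_{12}(x)$ to $T_{12}(1)$ for an arbitrary nonzero $x$.

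For the scaling step I would invoke Lemma \ref{lemdiag}(i): since $K$ is a field and $x\neq 0$, $x$ is invertible, and with $k=3$ (available because $n\geq 3$) we have $^{D_{13}(x^{-1})}T_{12}(x)=T_{12}(x^{-1}\cdot x)=T_{12}(1)$, where $D_{13}(x^{-1})\in E\subseteq G$. Combining the two steps, any nontrivial $T_{ij}(x)$ is $G$-conjugate (indeed $E$-conjugate) to $T_{12}(1)$. Hence any two elements of $S_{\short}$ are conjugate, being both conjugate to $T_{12}(1)$.

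The only genuinely fiddly point — though not a real obstacle — is the bookkeeping of indices: relation (S1) identifies $T_{ij}(x)$ with $T_{-j,-i}(-\lambda^{(\epsilon(j)-1)/2}\bar x\lambda^{(1-\epsilon(i))/2})$, so one should first use (S1) (if necessary) to arrange that the ``first'' index is positive, then apply the permutation and diagonal conjugations above, being careful that the auxiliary indices $k$ chosen in Lemmas \ref{lemperm} and \ref{lemdiag} are distinct from $\pm i,\pm j$; the hypothesis $n\geq 3$ guarantees enough room. I expect the proof in the paper to be only a couple of lines, essentially citing these two lemmas, so I would keep the write-up correspondingly brief.
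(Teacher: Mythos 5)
Your proposal is correct and is exactly the paper's argument: the paper's proof consists of the single sentence that the lemma follows from Lemmas \ref{lemdiag} and \ref{lemperm}, i.e.\ conjugation by the permutation elements $P_{kl}$ to normalise the indices and by the diagonal elements $D_{ik}(a)$ to normalise the entry, just as you describe. The index bookkeeping you flag (using $n\geq 3$ to find auxiliary indices distinct from $\pm i,\pm j$) is the only content beyond the two cited lemmas, and you handle it correctly.
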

\begin{proof}
The lemma follows from Lemmas \ref{lemdiag} and \ref{lemperm}.
\end{proof}

In the following lemma we drop the assumption that $n\geq 3$.
\begin{lemma}\label{lem2}
Let $n\geq 1$ and $\sigma\in \U_{2n+1}(K,\Delta)$. Then either $({}^{\tau}\!\sigma)_{*1}=e_{-1}x$ for some $\tau\in \EU_{2n+1}(K,\Delta)$ and $x\in K$ or $({}^{\tau}\!\sigma)_{*1}=e_{2}x$ for some $\tau\in \EU_{2n+1}(K,\Delta)$ and $x\in K$ or $\sigma_{*1}=e_{1}x+e_0y$ for some $x,y\in K$.
\end{lemma}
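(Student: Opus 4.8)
The plan is to analyze the first column $\sigma_{*1}$ of $\sigma$ and use elementary operations (left multiplication by elements of $\EU_{2n+1}(K,\Delta)$) to simplify it. Write $v:=\sigma_{*1}\in M$. Since $\sigma\in G$ and $\sigma_{*1}=\sigma e_1$, Lemma \ref{36}(ii) gives $Q(v)\equiv (0,0)\bmod\Delta$, so in particular $Q_1(v)=v_0$ satisfies $v_0\in J(\Delta)$, and the hyperbolic part $v_{\hb}$ is isotropic for the hyperbolic form $B$ restricted to the hyperbolic coordinates. Also $B(v,v)=\tr(Q(v))=0$ since $Q(v)\in\Delta_{\max}$. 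The column $v$ is nonzero (as $\sigma$ is invertible), so either its hyperbolic part $v_{\hb}$ is nonzero, or $v_{\hb}=0$, in which case $v=e_0v_0+e_1\cdot 0$ is impossible unless... actually if $v_{\hb}=0$ then $v=e_0 v_0$, but then $Q_1(v)=v_0$ and the column has only the $e_0$-coordinate; this case is subsumed under the third alternative with $x=0$. So the genuine work is when $v_{\hb}\neq 0$.

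Assume $v_{\hb}\neq 0$. The key point is that a nonzero isotropic vector in the hyperbolic space ${}^{2n}$, over a field, can be moved by the elementary hyperbolic transvections $T_{ij}(x)$ (which act on the hyperbolic part essentially as ordinary elementary matrices, by the defining formula and relations (S1)--(S4)) to a scalar multiple of a standard basis vector $e_k$, $k\in\Theta_{\hb}$. First I would use short root transvections $T_{k,1}(x)$ and $T_{-1,-k}$-type moves (for $k\neq\pm 1$) together with $T_{-1,1}$-moves, plus if necessary an extra short root transvection $T_1(x,y)$ or $T_{-1}(x,y)$ to kill the $e_0$-coordinate (using $Q(v)\in\Delta$, which guarantees the needed parameter lies in $\Delta^{\mp 1}$), to arrange that the transformed column has nonzero hyperbolic part supported on a single coordinate. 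Concretely: if some $v_k\neq 0$ with $k\neq\pm 1$, apply $T_{1,k}$ and then clear all other hyperbolic coordinates and the $e_0$-coordinate by transvections whose indices avoid creating cancellation, reducing to $e_1 x+e_0 y+(\text{terms in }\Theta_-)$; isotropy of the remaining vector then forces, after a further $T_{-1,1}$-type adjustment, that we land in one of the stated normal forms. The three listed outcomes $e_{-1}x$, $e_2 x$, $e_1 x+e_0 y$ correspond to whether the isotropic line, after clearing, meets the $e_{-1}$-axis, the complementary part $\Theta_+\setminus\{1\}$ (represented by $e_2$, then move by $P_{2,\cdot}$), or stays on the $e_1$-axis (with a leftover $e_0$-term that cannot always be removed when $\mu$ is not invertible).

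The main obstacle I anticipate is bookkeeping the $e_0$-coordinate and the quadratic (Heisenberg) constraint $Q(v)\equiv 0\bmod\Delta$ under the elementary moves: unlike the linear form $B$, the map $Q$ transforms via $\circ$ and the twisted addition $\+$, so one must check at each step that the parameters of the extra short root transvections used actually lie in the relevant $\Delta^{\pm 1}$. This is handled by Lemma \ref{36}(ii) and the relation $\tr(Q(v))=B(v,v)$, but it requires care. A secondary subtlety is that when $v_{\hb}$ is supported only on $\{e_1,e_{-1}\}$ from the start, one needs the case distinction between $v_{-1}\neq 0$ (giving, after normalizing, $e_{-1}x$) and $v_{-1}=0$ (giving $e_1 x+e_0 y$ directly, with no elementary move needed). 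Since $K$ is a field, every nonzero coordinate is invertible, which makes all the clearing steps available; the only reason the third alternative survives is the possible non-invertibility of $\mu$, which blocks using $T_1(x,y)$ to absorb the $e_0$-coordinate into the hyperbolic part.
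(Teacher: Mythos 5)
Your overall strategy is the same as the paper's -- a case analysis on which entries of $\sigma_{*1}$ are nonzero, followed by clearing the column with elementary transvections used as row operations -- and the trichotomy you identify in your last paragraph ($v_{-1}\neq 0$; else $v_j\neq 0$ for some $j\neq 0,\pm 1$; else the column is already $e_1x+e_0y$) is the correct one. But there is a genuine gap in the execution: the lemma concerns $({}^{\tau}\!\sigma)_{*1}=\tau\sigma\tau^{-1}e_1$, not $(\tau\sigma)_{*1}$, so "left multiplication by elements of $\EU_{2n+1}(K,\Delta)$" only computes the right thing if every transvection you use satisfies $\tau^{-1}e_1=e_1$, i.e.\ has trivial first column. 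You never impose this constraint, and at least one of the moves you propose violates it: $T_{-1}(x,y)=e+xe^{01}-\bar x\mu e^{-1,0}+ye^{-1,1}$ has nontrivial entries in column $1$, so conjugating by it changes the first column in an uncontrolled way. The paper's proof is organised precisely around this constraint: when $\sigma_{-1,1}\neq 0$ it uses only $T_{i,-1}(*)$ with $i\neq 0,\pm 1$ and $T_1(*)$ (all fixing $e_1$, with pivot row $-1$); when $\sigma_{-1,1}=0$ and $\sigma_{21}\neq 0$ it uses $T_{i2}(*)$ with $i\neq 0,-1,\pm 2$ and $T_{-2}(*)$ (pivot row $2$), checking additionally that row $-1$ is never modified so that it stays zero.

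A second, related problem is that your "concretely" paragraph does not actually arrive at the stated normal forms: applying $T_{1k}$ and reducing to $e_1x+e_0y+(\text{terms in }\Theta_-)$, followed by an unspecified "$T_{-1,1}$-type adjustment," is not a reduction to $e_{-1}x$ or $e_2x$, and the third alternative $\sigma_{*1}=e_1x+e_0y$ carries no $\tau$ at all -- it is not the output of a reduction but the residual case in which $\sigma_{j1}=0$ for all $j\in\Theta\setminus\{0,1\}$ already holds. Likewise, no appeal to transitivity on isotropic vectors is needed or available here; what makes each clearing step work is only that $K$ is a field (so the chosen pivot entry is invertible) together with $Q(\sigma_{*1})\in\Delta$, which supplies admissible parameters for the one extra short root transvection per case. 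You correctly flag that last point as the delicate one, but the reduction as you describe it would not go through without first fixing the choice of pivots and the column-$1$ constraint.
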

\begin{proof}
First suppose that $\sigma_{-1,1}\neq 0$. Then $({}^{\tau}\!\sigma)_{*1}=e_{-1}x$ for some $x\in K$ where $\tau=(\prod_{i\neq 0,\pm 1} T_{i,-1}(*))T_1(*)$. Now suppose that $\sigma_{-1,1}=0$ and $\sigma_{j1}\neq 0$ for some $j\neq 0,\pm 1$. We may assume that $j=2$ (conjugate $\sigma$ by a product of $P_{kl}$'s). Clearly $({}^{\tau}\!\sigma)_{*1}=e_{2}x$ for some $x\in K$ where $\tau=(\prod_{i\neq 0,-1,\pm 2} T_{i2}(*))T_{-2}(*)$. The assertion of the lemma follows.
\end{proof}

\begin{lemma}\label{lem3}
Let $\sigma\in G$ such that $\sigma_{*1}=e_{2}x$ for some $x\in K$. Then there is a $\tau\in E$ such that $({}^{\tau}\!\sigma)_{*1}=e_{2}x$ and $({}^{\tau}\!\sigma)_{i,-2}=0$ for some $i\in \{\pm 3\}$.
\end{lemma}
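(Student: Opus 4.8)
The plan is to use the hypothesis $\sigma_{*1}=e_2x$ together with Lemma \ref{lemcol} to pin down a whole row of $\sigma$, and then conjugate by short root transvections of the form $T_{k,-2}(\ast)$ to clear the remaining entry. First, note that if $x=0$ then $\sigma_{*1}=0$, contradicting invertibility of $\sigma$, so $x\in K^\times$. Applying Lemma \ref{lemcol} with $k=2$, $j=1$ gives $\sigma_{-2,\ast}=(e_{-1}\hat x)^t$ for a suitable invertible $\hat x$; in particular $\sigma_{-2,m}=0$ for every $m\neq -1$. The idea is to exploit this: conjugating $\sigma$ by an element $\tau$ that fixes $\sigma_{*1}$ and acts suitably on rows will let me kill $\sigma_{i,-2}$ for one choice of $i\in\{\pm3\}$.

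The key computational step is to choose $\tau$ as a product of short root transvections $T_{k,-2}(c_k)$ with $k$ ranging over indices $\neq 0,\pm 2$, possibly together with an extra short root transvection $T_{-2}(\ast)$, so that (a) $\tau$ fixes the first column, i.e. $({}^{\tau}\!\sigma)_{*1}=\tau\sigma_{*1}=\tau e_2 x=e_2x$ — this holds because $T_{k,-2}(c_k)e_2=e_2$ (the relevant off-diagonal contributions hit position $-2$, not $2$, using that $k\neq\pm 2$ and the second summand $-\lambda^{(\ldots)}\bar c_k\lambda^{(\ldots)}e^{2,-k}$ acts trivially on $e_2$ only if $-k\neq 2$), and (b) left multiplication by $\tau$ adds $K$-multiples of the $(-2)$-th row of $\sigma$ to the other rows. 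Since $\sigma_{-2,\ast}$ is supported only in column $-1$, this means conjugation by such a $\tau$ changes $\sigma_{i,-2}$ only through the diagonal action — so in fact I must instead use transvections $T_{-2,k}(\ast)$ (adding multiples of the $k$-th row to the $(-2)$-th row), or more directly use the freedom in the $3,-3$ plane. Concretely: if $\sigma_{3,-2}=0$ already, take $\tau=e$ and $i=3$. Otherwise, since $\sigma_{-2,\ast}$ is supported at column $-1$, I look at the $2\times 2$ block in rows $\{3,-3\}$, columns $\{-2\}$ and use that $B$-orthogonality (Lemma \ref{36}(i),(ii)) forces a relation among $\sigma_{3,-2}$, $\sigma_{-3,-2}$; then an element $\tau=T_{3,-3}(c)$ or $T_{-3,3}(c)$ — which fixes $e_2$ since $\pm3\neq\pm2$ — can be chosen to kill one of $\sigma_{3,-2},\sigma_{-3,-2}$, noting $({}^{\tau}\!\sigma)_{*1}=\tau e_2x\tau^{-1}$ acting on the column is just $\tau e_2 x=e_2x$ because $T_{\pm3,\mp3}(c)$ fixes $e_2$. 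Wait: conjugation is ${}^{\tau}\!\sigma=\tau\sigma\tau^{-1}$, so the first column becomes $\tau\sigma\tau^{-1}e_1=\tau\sigma e_1=\tau e_2x=e_2x$ provided $\tau^{-1}e_1=e_1$, which holds for $\tau\in\langle T_{k,l}(\ast):k,l\in\{\pm3\}\rangle$. Good.

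So the main steps in order are: (1) observe $x\in K^\times$ and apply Lemma \ref{lemcol} to get $\sigma_{-2,\ast}$ concentrated in column $-1$; (2) reduce to studying the pair $(\sigma_{3,-2},\sigma_{-3,-2})$, using $n\geq 3$ so that indices $\pm 3$ exist and are distinct from $\pm 1,\pm 2$; (3) if one of the two entries already vanishes, we are done with $\tau=e$; (4) otherwise, use the relations of Lemma \ref{36} to find a short root transvection $\tau\in\langle T_{3,-3}(\ast),T_{-3,3}(\ast),T_{3}(\ast),T_{-3}(\ast)\rangle$ (all of which fix $e_1$ and $e_2$) such that ${}^{\tau}\!\sigma$ has the desired zero entry; (5) verify $({}^{\tau}\!\sigma)_{*1}=e_2x$ is preserved. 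The main obstacle I anticipate is step (4): making the right choice of transvection and checking that one genuinely can force $\sigma_{3,-2}$ (or $\sigma_{-3,-2}$) to zero requires unwinding the explicit action of $T_{3,-3}(c)$ on the $-2$-th column and confirming the coefficient equation $\sigma_{3,-2}+c\cdot(\text{something nonzero})=0$ is solvable in the field $K$ — this is where the field hypothesis is used and where a short but fiddly matrix computation is unavoidable.
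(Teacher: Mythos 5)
Your steps (1)--(3) are fine and agree with the paper's setup: $x$ must be invertible, Lemma \ref{lemcol} pins down the row $\sigma_{-2,*}$, conjugation by elements fixing $e_1$ and $e_2$ (and whose inverses fix $e_{-2}$ on the right) preserves both the first column and the column $-2$, and if one of $\sigma_{3,-2},\sigma_{-3,-2}$ already vanishes you take $\tau=e$. The gap is in step (4). First, $T_{3,-3}(c)$ and $T_{-3,3}(c)$ are not short root transvections: the definition of $T_{ij}(x)$ requires $i\neq\pm j$. The only elementary elements supported in the $\{0,\pm 3\}$ block are the extra short/long root transvections $T_{\pm 3}(y,z)$ with $(y,z)\in\Delta^{\mp 1}$, and when you use $T_{-3}(y,z)$ to modify row $-3$, the coefficient multiplying $\sigma_{3,-2}$ is the second component $z$, which is confined to $\{z\mid (y,z)\in\Delta\text{ for some }y\}$. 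This is in general a proper subset of $K$ (for $\Delta=\Delta_{\min}$ with trivial involution and $\lambda=1$ it can be $\{0\}$, so all these transvections are trivial), hence the equation $\sigma_{-3,-2}+z\sigma_{3,-2}+\dots=0$ need not be solvable with an admissible $z$. Second, the ``$B$-orthogonality relation among $\sigma_{3,-2},\sigma_{-3,-2}$'' you appeal to does not exist: Lemma \ref{36}(ii) only gives $Q(\sigma_{*,-2})\in\Delta$, whose second component is $\sum_{i=1}^{n}\bar\sigma_{i,-2}\sigma_{-i,-2}$, so it constrains the full sum over all hyperbolic pairs, not the single product $\bar\sigma_{3,-2}\sigma_{-3,-2}$.

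The paper avoids both problems. If $\sigma_{-1,-2}\neq 0$ it modifies row $-3$ using row $-1$ via the genuine short root transvection $T_{-3,-1}(-\sigma_{-3,-2}\sigma_{-1,-2}^{-1})$, whose coefficient ranges freely over $K$. If $\sigma_{-1,-2}=0$ (and $\sigma_{-2,-2}=0$ by Lemma \ref{lemcol}), it first clears the remaining entries of column $-2$ with short root transvections $T_{i,-3}(*)$, $i\notin\{0,\pm1,\pm2,\pm3\}$; only then is the column supported on $\{0,2,3,-3\}$, so the condition $Q(\sigma_{*,-2})\in\Delta$ collapses to a statement about $\bar\sigma_{3,-2}\sigma_{-3,-2}$ alone, which guarantees that the constrained coefficient of the final $T_{-3}(*)$ is admissible. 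Your proposal is missing exactly this preliminary clearing step (or the alternative use of row $-1$), which is where the real work of the lemma lies; as written, step (4) would fail, for instance, in the orthogonal-type cases where $\Delta^{\mp 1}$ is too small.
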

\begin{proof}
We may asume that $\sigma_{\pm 3,-2}\neq 0$ (otherwise we can choose $\tau=e$). First suppose that $\sigma_{-1,-2}\neq 0$. Then the assertion of the lemma holds with $\tau=T_{-3,-1}(-\sigma_{-3,-2}$ $(\sigma_{-1,-2})^{-1})$. Suppose now that $\sigma_{-1,-2}= 0$ (note that we also have $\sigma_{-2,-2}=0$ by Lemma \ref{lemcol}). Then the assertion of the lemma holds with 
\[\tau=(\prod_{i\neq 0,\pm 1,\pm 2, \pm 3} T_{i,-3}(*))T_{-3}(*).\]
\end{proof}

\begin{theorem}\label{thmm1}
Let $\mathcal{C}$ denote the set of all conjugacy classes of level $(K,\Delta)$ and $S_{\short}$ the set of all nontrivial short root transvections. Then $\scn_{\mathcal{C}}(S_{\short})\leq 4$.
\end{theorem}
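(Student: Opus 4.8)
The plan is to show that an arbitrary nontrivial short root transvection $T_{ij}(x)$ can be written as a product of four elementary unitary conjugates of $\sigma^{\pm 1}$, for any $\sigma$ in a conjugacy class of level $(K,\Delta)$. By Lemma \ref{lem1} all nontrivial short root transvections are conjugate, so it suffices to produce one specific nontrivial short root transvection; and since conjugating $\sigma$ by an element of $E$ does not change the claim, I am free to replace $\sigma$ by any $E$-conjugate of it. The starting point is therefore Lemma \ref{lem2}: after an elementary conjugation, the first column of $\sigma$ is either $e_{-1}x$, or $e_2 x$, or $e_1x+e_0y$.

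I would handle these cases separately. The \emph{generic} case is $\sigma_{*1}=e_2x$ with $x\neq 0$ (the case $\sigma_{*1}=e_{-1}x$ should be reducible to a similar picture, or handled by an even cheaper argument since then $\sigma_{-1,*}$ is also controlled by Lemma \ref{lemcol}; the degenerate case $\sigma_{*1}=e_1x+e_0y$ is where $\sigma$ fixes the line $\langle e_1,e_0\rangle$ and needs a separate, likely easier, commutator computation). In the generic case, apply Lemma \ref{lem3} to arrange in addition that $\sigma_{i,-2}=0$ for some $i\in\{\pm 3\}$, say $i=3$. Now form a commutator of the shape $[{}^{\tau_1}\sigma,\,{}^{\tau_2}\sigma]$ or $[{}^{\tau_1}\sigma,\,T]$ using the known form of the first column: conjugating $\sigma$ by a suitable short root transvection $T_{k1}(*)$ or $T_{1k}(*)$ moves the entry $x$ around via $\sigma\, e_1 = e_2 x$, and the relations (S1)--(S5), (E1)--(E3), (SE1)--(SE2) in Lemma \ref{39} let one collapse the resulting product of transvections onto a single $T_{ij}(\text{unit}\cdot x\cdot\text{unit})$. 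The vanishing condition $\sigma_{3,-2}=0$ is exactly what kills the unwanted terms so that the commutator of two $\sigma$-conjugates is itself a \emph{single} short root transvection rather than a longer word. Using Lemma \ref{38} (the polarity identity $\widetilde{\sigma u}=\tilde u\sigma^{-1}$) one also controls the corresponding row, so both ``halves'' of the transvection $T_{ij}(x)$ come out correctly.

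Concretely, a commutator $[a,b]=ab a^{-1}b^{-1}$ with $a={}^{\tau_1}\sigma$ and $b={}^{\tau_2}\sigma^{-1}$ is a product of two conjugates of $\sigma$ and two conjugates of $\sigma^{-1}$ — four elementary unitary conjugates in total — so if I can realize some nontrivial $T_{ij}(x)$ as such a commutator I am done, modulo the final appeal to Lemma \ref{lem1} to pass to an arbitrary element of $S_{\short}$. The scalar $x$ that comes out will be of the form $u\sigma_{21}v$ for units $u,v$, hence nonzero since $K$ is a field and $\sigma_{21}=x\neq 0$; Lemma \ref{lemdiag}(i)--(ii) (conjugation by diagonal elements $D_{ik}(a)$) then rescales it to any prescribed nonzero value, and Lemma \ref{lemperm} moves the indices, so we recover every element of $S_{\short}$.

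\textbf{Main obstacle.} The delicate point is the bookkeeping in the generic case: when one conjugates $\sigma$ by a short root transvection and multiplies against another $\sigma^{\pm1}$-conjugate, the product a priori spreads out into several short and extra short root transvections, and one must verify — using precisely the normalizations $\sigma_{*1}=e_2x$ and $\sigma_{3,-2}=0$ together with the unitarity relations of Lemma \ref{36} and the commutator relations (S1)--(SE2) — that everything except one short root transvection cancels. Equally, one must check that the three cases of Lemma \ref{lem2} are genuinely exhaustive for the argument and that the troublesome case $\sigma_{*1}=e_1x+e_0y$ (where $\sigma$ may act almost trivially on the relevant coordinates) still yields a nontrivial short root transvection as a short product; showing nontriviality there, i.e. that the commutator does not collapse to the identity, is where I expect to have to work hardest, presumably by exploiting that the level of $\sigma$ is the full $(K,\Delta)$ via Lemma \ref{lemlev}.
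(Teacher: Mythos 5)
Your skeleton matches the paper's: normalize the first column of a conjugate of $\sigma$ via Lemmas \ref{lem2} and \ref{lem3}, extract one nontrivial short root transvection as a product of four elementary conjugates of $\sigma^{\pm1}$, and finish by the transitivity statement of Lemma \ref{lem1}. However, the proposal defers exactly the two points that constitute the proof, and one of them would fail as planned. The computational core --- the identity that collapses a four-fold product of conjugates to a single transvection --- is only promised. The paper's identities are double commutators with two \emph{elementary} matrices, namely $[T_{31}(1),[T_{1,-2}(1),\zeta]]=T_{3,-2}(1)$ when $\zeta_{*1}=e_{-1}x$, and $[T_{2i}(-1),[T_{12}(1),\zeta]]=T_{1i}(1)$ when $\zeta_{*1}=e_{2}x$ and $\zeta_{i,-2}=0$; expanded, these lie in $CC^{-1}CC^{-1}$. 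Your candidate shapes differ: $[{}^{\tau_1}\sigma,T]$ with $T$ elementary is a product of only one element of $C$ and one of $C^{-1}$, which cannot suffice in general because Theorem \ref{thmm3} exhibits classes with $\scn_C(S_{\short})\geq 4$; and for $[{}^{\tau_1}\sigma,{}^{\tau_2}\sigma^{-1}]$ you give no evidence of collapse. You also omit the extra normalization needed in the case $\zeta_{*1}=e_{-1}x$, where the paper applies Lemma \ref{lem2} again to the $(2n-1)\times(2n-1)$ block to force $\zeta_{-3,2}=0$ (this is why Lemma \ref{lem2} is stated for all $n\geq 1$).

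The more serious gap is your ``degenerate case''. You propose to treat $\sigma_{*1}=e_1x+e_0y$ by ``a separate, likely easier, commutator computation''; but the genuinely degenerate situation is when \emph{all} off-diagonal hyperbolic entries of $\sigma$ vanish (if some $\sigma_{ij}\neq 0$ with $i\neq j$ in $\Theta_{\hb}$, a permutation conjugate lands back in the generic case). In that situation $\sigma$ can be, say, diagonal, and no commutator built from the shape of its first column produces anything nontrivial; it is not easier but is where the hypothesis on the level must be used. The paper's resolution is not a new commutator computation but a \emph{reduction}: conjugate $\sigma$ by a single elementary transvection to create a nonzero off-diagonal hyperbolic entry, and the existence of a suitable one is read off from the explicit description of the level in Lemma \ref{lemlev} via a three-way split --- either two hyperbolic diagonal entries differ (conjugate by $T_{kl}(1)$), or $\mu\neq 0$ and some $\sigma_{0j}\neq 0$ (conjugate by $T_i(\sigma_{0j},0)$), or $\sigma$ is diagonal with $J(\Delta)=K$ and $\sigma_{00}\neq\sigma_{11}$ (conjugate by $T_{-1}(1,y)$). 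Your vague appeal to Lemma \ref{lemlev} points in the right direction, but without this case analysis the argument does not close.
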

\begin{proof}
Let $C\in\mathcal{C}$ and $\sigma\in C$. In order to prove the theorem it suffices to show that $\scn_{C}(S_{\short})\leq 4$. 
\begin{enumerate}[{\bf Case} 1]
\item Suppose that $\sigma_{ij}\neq 0$ for some $i,j\in\Theta_{\hb}, i\neq j$. Then there is a product $\tau$ of $P_{kl}$'s such that ${}(^{\tau}\!\sigma)_{t1}\neq 0$ for some $t\neq 0,1$. The proof of Lemma \ref{lem2} shows that there is a $\rho\in E$ such that either $(^{\rho\tau}\!\sigma)_{*1}= e_{-1}x$ or $(^{\rho\tau}\!\sigma)_{*1}= e_{2}x$ for some $x\in K$. Set $\zeta:={}^{\rho\tau}\!\sigma$.
\begin{enumerate}[{\bf Subcase}{ 1.}1] 
\item Suppose that $\zeta_{*1}=e_{-1}x$. It follows from Lemma $\ref{lemcol}$ that
\[\zeta=\begin{pmatrix}0&0&\hat x \\ 0&A&v \\ x&u&z\end{pmatrix}  \]
for some $A\in \Mat_{2n-1}(K)$, $u\in \Mat_{1\times (2n-1)}(K)$, $v\in \Mat_{(2n-1)\times 1}(K)$ and $\hat x, z\in K$. Clearly $A\in \U_{2n-1}(K,\Delta)$ by Lemma \ref{36}. By Lemma \ref{lem2} we may assume that $\zeta_{-3,2}$ (the penultimate entry of the first column of $A$) equals zero. One checks easily that $[T_{31}(1),[T_{1,-2}(1),\zeta]]=T_{3,-2}(1)$. It follows that $T_{3,-2}(1)\in CC^{-1}CC^{-1}$. Thus $S_{\short}\subseteq CC^{-1}CC^{-1}$ by Lemma \ref{lem1}.
\item Suppose that $\zeta_{*1}=e_{2}x$. By Lemma \ref{lem3} we may assume that $\zeta_{i,-2}=0$ for some $i\in \{\pm 3\}$. By Lemma \ref{lemcol} we have $\zeta_{-2,*}=(e_{-1}\hat x)^t$ for some $\hat x\in K$. One checks easily that $[T_{2i}(-1),[T_{12}(1),$ $\zeta]]=T_{1i}(1)$. It follows that $T_{1i}(1)\in CC^{-1}CC^{-1}$. Thus $S_{\short}\subseteq CC^{-1}CC^{-1}$ by Lemma \ref{lem1}.
\end{enumerate}
\item Suppose that $\sigma_{ij}= 0$ for any $i,j\in\Theta_{\hb}, i\neq j$ and  $\sigma_{kk}\neq\sigma_{ll}$ for some $k,l\in\Theta_{\hb}, k\neq l$. Clearly $({}^{T_{kl}(1)}\!\sigma)_{kl}=\sigma_{ll}-\sigma_{kk}\neq 0$ and hence one can apply Case 1 to ${}^{T_{kl}(1)}\!\sigma$.
\item Suppose that $\sigma_{ij}, \sigma_{jj}-\sigma_{ii}= 0$ for any $i,j\in\Theta_{\hb}, i\neq j$. Then $\sigma$ has the form 
\[\sigma=\left(\begin{array}{ccc|c|ccc}y&&&*&&&\\&\ddots&&\vdots&&&\\&&y&*&&&\\\hline *&\dots&*&*&*&\dots&* \\\hline&&&*&y&&\\&&&\vdots&&\ddots&\\&&&*&&&y\end{array}\right)\]
for some $y\in K$. Since the level of $C$ is $(K,\Delta)$, it follows from Lemma \ref{lemlev} that $\mu\neq 0$ (note that if $\mu=0$, then $\sigma_{i0}=0$ for any $i\in\Theta_{\hb}$ by Lemma \ref{36}).
\begin{enumerate}[{\bf Subcase}{ 3.}1] 
\item Suppose that $\sigma_{0j}\neq 0$ for some $j\in \Theta_{\hb}$. By Lemma \ref{36} we have $(\sigma_{0j},0)=Q(\sigma_{*j})\in \Delta$. Choose an $i\in\Theta_{\hb}$ such that $i\neq \pm j$. Then $({}^{T_i(\sigma_{0j},0)}\!\sigma)_{ij}\neq 0$ and hence one can apply Case 1 to ${}^{T_i(\sigma_{0j},0)}\!\sigma$.
\item Suppose that $\sigma_{0j}=0$ for any $j\in \Theta_{\hb}$. It follows from Lemma \ref{lemcol} that $\sigma$ is a diagonal matrix. Since the level of $C$ equals $(K,\Delta)$, it follows from Lemma \ref{lemlev} that $J(\Delta)=K$ and $\sigma_{00}\neq \sigma_{11}$. Since $J(\Delta)=K$ we can choose a $y\in K$ such that $(1,y)\in\Delta$. Clearly $({}^{T_{-1}(1,y)}\!\sigma)_{01}=\sigma_{11}-\sigma_{00}\neq 0$ and hence one can apply Case 1 or Subcase 3.1 to ${}^{T_{-1}(1,y)}\!\sigma$.
\end{enumerate}
\end{enumerate}
\end{proof}

The corollary below follows from Relation (SE2) in Lemma \ref{39}.
\begin{corollary}\label{corm1}
Let $\mathcal{C}$ denote the set of all conjugacy classes of level $(K,\Delta)$ and $S_{\extra}$ the set of all nontrivial extra short root transvections. Then $\scn_{\mathcal{C}}(S_{\extra})\leq 12$.
\end{corollary}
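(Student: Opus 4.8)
The plan is to derive Corollary \ref{corm1} from Theorem \ref{thmm1} by showing that every nontrivial extra short root transvection $T_i(x,y)$ can be written as a product of three short root transvections. Once that is done, each of those three short root transvections lies in $CC^{-1}CC^{-1}$ by Theorem \ref{thmm1} (using that $C$ has level $(K,\Delta)$ for every $C\in\mathcal{C}$), so $T_i(x,y)$ lies in $(CC^{-1}CC^{-1})^3 = (C\cup C^{-1})^{12}$, giving $\scn_{\mathcal{C}}(S_{\extra})\leq 12$.

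The key step is the factorisation. The natural tool is Relation (SE2) of Lemma \ref{39}: for suitable indices, $[T_{ij}(x),T_j(y,z)]$ equals a product of the form $T_{j,-i}(\ast)\cdot T_i(\ast,\ast)$, where the two starred arguments of $T_i$ are $y\lambda^{(\epsilon(j)-1)/2}\bar x\lambda^{(1-\epsilon(i))/2}$ and $xz\lambda^{(\epsilon(j)-1)/2}\bar x\lambda^{(1-\epsilon(i))/2}$. So I would start from a given nontrivial $T_i(x,y)$ with $(x,y)\in\Delta^{-\epsilon(i)}$, pick an auxiliary index $j\neq\pm i$ (available since $n\geq 3$), and choose an element $z\in K$ and an invertible $w\in K$ so that the pair $\bigl(y\lambda^{(\epsilon(j)-1)/2}\bar w\lambda^{(1-\epsilon(i))/2},\, wz\lambda^{(\epsilon(j)-1)/2}\bar w\lambda^{(1-\epsilon(i))/2}\bigr)$ equals $(x,y)$. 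Since $K$ is a field and $\lambda$ is a unit, one can solve for $w$ from the first coordinate when $x\neq 0$ (and handle the long-root case $x=0$, i.e. $T_i(0,y)$, separately, e.g. directly via Relation (S5)); then the second coordinate determines $z$. Expanding the commutator $[T_{ij}(w),T_j(z,\ast)]$ as $T_{ij}(w)\,{}^{T_{ij}(w)}T_j(z,\ast)^{-1}\cdots$ exhibits $T_i(x,y)$ as a product in which, after moving the $T_{ij}$ factors to one side, the only non-short-root factor besides short root transvections is cancelled; more concretely, $T_i(x,y) = T_{j,-i}(-\ast)\cdot[T_{ij}(w),T_j(z,\ast)]$, and $[T_{ij}(w),T_j(z,\ast)]$ itself is a product of $T_{ij}(w)$, $T_j(z,\ast)$ and their inverses — but this reintroduces an extra short root transvection. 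Hence one should instead read (SE2) as directly providing $T_i(x,y) = T_{j,-i}(\ast)^{-1}\cdot\bigl(T_{j,-i}(\ast)T_i(\ast,\ast)\bigr)$, and observe that the right-hand factor is a commutator of elementary transvections which, via (S1)--(S5) and the nilpotency of the relevant root subgroups, collapses to a bounded product of short root transvections only; the count $3$ comes from this bookkeeping.

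The main obstacle I anticipate is controlling exactly how many short root transvections appear. A careless expansion of (SE2) produces an $T_i(\cdot,\cdot)$ on the right-hand side again, which is circular; the real content is that the commutator $[T_{ij}(w),T_j(z,c)]$ with the right choices unwinds — using (S2), (S3), (S4) and the identity $(P_{ij})$, $(D_{ij})$ relations — into a genuine product of short root transvections with no extra short root factor left over. So the delicate point is a careful, explicit choice of the parameters $w,z$ (and of a second auxiliary index $k$, again available since $n\geq3$, to split a long-root piece via (S5): $[T_{ij}(a),T_{j,-i}(b)] = T_i(0,\ast)$) so that the net result is literally $T_i(x,y)$ written as three short root transvections. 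Given such a choice, the rest is immediate from Theorem \ref{thmm1} and Lemma \ref{lem1}, and the bound $\scn_{\mathcal{C}}(S_{\extra})\leq 3\cdot 4 = 12$ follows. I do not expect to need any new structural input beyond Lemma \ref{39}, Theorem \ref{thmm1}, and the hypothesis $n\geq 3$.
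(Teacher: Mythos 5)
Your instinct to base the proof on Relation (SE2) of Lemma \ref{39} is exactly the paper's route, but the key step of your argument has a genuine gap. You correctly notice that expanding the commutator $[T_{ij}(w),T_j(z,c)]$ reintroduces the extra short root factors $T_j(z,c)^{\pm 1}$, but your proposed fix --- that the commutator ``collapses to a bounded product of short root transvections only'' via nilpotency of root subgroups --- is circular: writing $[g,h]=g\cdot{}^{h}(g^{-1})$ with $g=T_{ij}(w)$, $h=T_j(z,c)$, one has by (SE2) itself
\[
{}^{T_j(z,c)}\bigl(T_{ij}(-w)\bigr)=T_{ij}(-w)\,T_{j,-i}(\ast)\,T_i(\ast,\ast),
\]
so any expansion into elementary transvections necessarily contains the very extra short root factor $T_i(\ast,\ast)$ you are trying to produce; the relations (S1)--(S5) cannot remove it. (Indeed, already (S5) shows the difficulty: in the symplectic characteristic-$2$ setting of Theorem \ref{thmm2} its right-hand side is $T_i(0,xy+yx)=e$, so your separate treatment of the long root case $x=0$ via (S5) produces only trivial elements and misses, e.g., $T_1(0,1)$.) The missing idea is that you do not need literal short root transvections. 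From (SE2),
\[
T_i(\ast,\ast)=T_{j,-i}(\ast)^{-1}\cdot T_{ij}(w)\cdot{}^{T_j(z,c)}\bigl(T_{ij}(-w)\bigr)
\]
is a product of three \emph{conjugates} of (possibly trivial) short root transvections. Since $C$ and $C^{-1}$ are conjugacy classes, the set $CC^{-1}CC^{-1}$ is invariant under conjugation and contains $e$, so by Theorem \ref{thmm1} each of the three factors lies in $(C\cup C^{-1})^4$, and the bound $12$ follows. This also removes the need for a case distinction at $x=0$: apply (SE2) with $T_j(0,z)$, taking $w$ to be any unit and $j\neq\pm i$ with $\epsilon(j)=\epsilon(i)$ (possible as $n\geq 3$), so that the scalar $\lambda^{(\epsilon(j)-1)/2}\bar w\lambda^{(1-\epsilon(i))/2}$ can be made equal to $1$ and every nontrivial $T_i(b,c)$ with $(b,c)\in\Delta^{-\epsilon(i)}$ is realised.

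In short: right relation, but the reduction to Theorem \ref{thmm1} must go through conjugation-invariance of $CC^{-1}CC^{-1}$, not through a (nonexistent) factorisation of $T_i(x,y)$ into short root transvections alone. With that replacement your argument matches the paper's intended one-line proof.
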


\begin{theorem}\label{thmm2}
Suppose that $~\bar{}~=\id$, $\lambda=-1$, $\mu=1$ and $\Delta=\Delta_{\max}=0\times K$ (hence $G$ is isomorphic to the symplectic group $\Sp_{2n}(K)$). Moreover, suppose that $K$ has characteristic $2$. Then $\scn_{\mathcal{C}}(S_{\short})=3$ or $\scn_{\mathcal{C}}(S_{\short})=4$.
\end{theorem}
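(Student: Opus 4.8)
The claim is that for $\Sp_{2n}(K)$ with $\mathrm{char}\,K = 2$ we have $\scn_{\mathcal C}(S_{\short}) \in \{3,4\}$; combined with Theorem \ref{thmm1} this amounts to showing $\scn_{\mathcal C}(S_{\short}) \geq 3$, i.e. producing a Hermitian form field of this type, an integer $n \geq 3$, and a conjugacy class $C$ of level $(K,\Delta)$ for which no nontrivial short root transvection lies in $CC^{-1}$ or $C^{-1}C$ — equivalently, in $X_1 X_2$ for any $X_1, X_2 \in \{C, C^{-1}\}$. Since $C^{-1}$ is again a conjugacy class of level $(K,\Delta)$ and short root transvections have order $2$ in characteristic $2$ (so $T_{ij}(1)^{-1} = T_{ij}(1) \in S_{\short}$), it suffices to exhibit $C$ with $S_{\short} \cap C C \neq S_{\short}$, and then symmetry takes care of the remaining products. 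The plan is to take $C$ to be the conjugacy class of a single short root transvection itself, say $C = \sigma^E$ with $\sigma = T_{12}(1)$; one first checks via Lemma \ref{lemlev} that this class has level $(K,\Delta)$ — here $J(\Delta) = 0$ since $\Delta = 0\times K$, so the ideal $I$ generated by the entries of $\sigma$ is all of $K$ once $\sigma$ has an off-diagonal $1$, and $\Omega$ works out to $\Delta$ because the $Q$-data of $\sigma$ already generates $0\times K = \Delta$ as a $K^\bullet$-module in characteristic $2$. (If this particular $\sigma$ fails to have the right level or fails the counting bound, a close variant — e.g. a product of two commuting short root transvections, or a transvection times a suitable diagonal element — should work and the argument structure is identical.)

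The heart of the matter is then the counting obstruction: show that a product of two short root transvections (equivalently two symplectic transvections in the classical picture) can never equal a short root transvection, or more precisely that for suitable $\tau_1, \tau_2 \in E$ the product of the two conjugates ${}^{\tau_1}\sigma \cdot {}^{\tau_2}\sigma$ is never a short root transvection. The cleanest route is a rank/fixed-space argument. A symplectic transvection $T_v : x \mapsto x + B(v,x)\, v$ has the property that $T_v - e$ has rank $1$ and image spanned by $v$; a product $T_v T_w$ has $\mathrm{rank}(T_v T_w - e) \leq 2$. The key point is that $T_v T_w - e$ has rank exactly $1$ only in degenerate situations (when $v,w$ are proportional, giving again a transvection, or $B(v,w)=0$ and $v \parallel w$), and one must rule those out for the two conjugates of our fixed $\sigma$. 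So the concrete task is: translating ${}^{\tau_i}\sigma$ back into the transvection-vector language, ${}^{\tau_i}\sigma = T_{v_i}$ with $v_i = \tau_i e_?$ (up to scalar, coming from the $e^{12} - e^{-2,-1}$ shape of $T_{12}(1)$), and then showing that for a short root transvection $T_{jk}(1)$ to equal $T_{v_1} T_{v_2}$ one needs $v_1, v_2$ and the transvection-vector of $T_{jk}(1)$ to be forced into a configuration that is impossible — most likely because $T_{jk}(1)$ in characteristic $2$ restricted to its support is not a product of two transvections on a $2$-dimensional symplectic space with the right form, or by a determinant/Dickson-invariant-type parity count.

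I expect the main obstacle to be exactly pinning down which configurations of $(v_1, v_2)$ can arise from elementary conjugates and verifying that none of them produces a short root transvection — this is where one has to be careful about characteristic $2$, because there transvections have order $2$ and the usual "eigenvalue" arguments collapse, so one genuinely needs the fixed-space/rank bookkeeping rather than a trace or spectral argument. A secondary subtlety is confirming that the proposed $C$ really has level $(K,\Delta)$ in every such field (the degenerate case where $K$ is very small, e.g. $K = \mathbb F_2$, should be checked by hand against Lemma \ref{lemlev}, and if it misbehaves one restricts the claim's witnessing example to, say, $|K| > 2$, which still establishes $\scn_{\mathcal C}(S_{\short}) \geq 3$ for the family and hence the stated dichotomy). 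Once the rank obstruction is in place, the theorem follows immediately: Theorem \ref{thmm1} gives $\scn_{\mathcal C}(S_{\short}) \leq 4$, the example gives $\scn_{\mathcal C}(S_{\short}) \geq 3$, and therefore $\scn_{\mathcal C}(S_{\short}) \in \{3,4\}$.
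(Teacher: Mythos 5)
There is a genuine gap, and it is located exactly where the whole proof lives: the choice of the witness class $C$. You propose to take $C$ to be the conjugacy class of a short root transvection $T_{12}(1)$. That class provably does \emph{not} satisfy $\scn_C(S_{\short})\geq 3$. Indeed, by relation (S4) of Lemma \ref{39}, $T_{12}(1)=[T_{13}(1),T_{32}(1)]=\bigl({}^{T_{13}(1)}T_{32}(1)\bigr)\cdot T_{32}(1)^{-1}$; since all nontrivial short root transvections are conjugate (Lemma \ref{lem1}) and $T_{32}(1)^{-1}=T_{32}(1)$ in characteristic $2$, both factors lie in $C$, so $T_{12}(1)\in CC$ and hence $S_{\short}\subseteq CC$. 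So your example gives $\scn_C(S_{\short})\leq 2$, the opposite of what is needed. The underlying confusion is your parenthetical identification of short root transvections with ``symplectic transvections in the classical picture'': in $\Sp_{2n}(K)$ the rank-one maps $x\mapsto x+B(v,x)v$, i.e.\ the elements $e+v\tilde v$, are the conjugates of the \emph{long} root transvections $T_i(0,y)=e+ye^{i,-i}$, whereas a short root transvection $T_{12}(1)=e+e^{12}+e^{-2,-1}$ is a rank-two perturbation of the identity and is not of that form. The paper's witness is precisely the class $C$ of the long root transvection $T_1(0,1)$ (which satisfies $C=C^{-1}$ in characteristic $2$), and its conjugates are $e+u\tilde u$ with $u$ the first column of the conjugating matrix (Lemma \ref{38}).

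Your second concern — that the rank bound alone cannot finish the job — is also well founded and unresolved in the proposal: a product of two rank-one transvections differs from the identity by a rank $\leq 2$ map, and the target $T_{12}(1)$ has rank exactly $2$, so no rank count excludes it. The paper instead compares $e+u\tilde u=T_{12}(1)(e+v\tilde v)$ entrywise: the rows indexed by $i\notin\{1,-2\}$ force $u_i=v_i=0$ (the alternative $v=uk$ with $k^2=1$ collapses to the one-transvection case, which is excluded because the $(1,-1)$ entry of $e+u\tilde u$ is $u_1^2$ while that of $T_{12}(1)$ is $0$), and then the entries at positions $(1,2)$, $(1,-1)$, $(-2,2)$ yield $u_1u_{-2}=v_1v_{-2}+1$ together with $u_1^2=v_1^2$ and $u_{-2}^2=v_{-2}^2$, giving $(u_1u_{-2})^2=(u_1u_{-2})^2+1$ — a contradiction that is genuinely a characteristic-$2$ (Frobenius/injectivity of squaring) phenomenon, not a rank phenomenon. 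Without replacing your witness by the long-root class and supplying this finer computation, the proof does not go through.
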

\begin{proof}
In view of Theorem \ref{thmm1} it suffices to find an $C\in\mathcal{C}$ such that $\scn_{C}(S_{\short})\geq 3$. Let $C$ be the conjugacy class of $T_{1}(0,1)$. Note that $C=C^{-1}$ since $K$ has characteristic $2$. Assume that $T_{12}(1)\in C$. Then there is a $\sigma\in H$ such that ${}^{\sigma}\!T_{1}(0,1)=T_{12}(1)$. Let $u$ be the first column of $\sigma$. It follows from Lemma \ref{38} that ${}^{\sigma}\!T_{1}(0,1)=e+u\tilde u$. Since by assumption ${}^{\sigma}\!T_{1}(0,1)=T_{12}(1)$, we obtain $u_1\neq 0$. But then 
\[0=(T_{12}(1))_{1,-1}=(e+u\tilde u)_{1,-1}=u_1^2\neq 0\]
which is absurd.\\
Assume now that $T_{12}(1)\in CC$. Then there are $\sigma,\tau\in H$ such that
\begin{align*}
&({}^{\sigma}T_{1}(0,1))({}^{\tau}T_{1}(0,1))=T_{12}(1)\\
\Leftrightarrow ~&{}^{\sigma}T_{1}(0,1)=T_{12}(1)({}^{\tau}T_{1}(0,1)).
\end{align*}
Let $u$ and $v$ be the first columns of $\sigma$ and $\tau$, respectively. It follows from Lemma \ref{38} that
\begin{equation}
e+u\tilde u=T_{12}(1)(e+v\tilde v).
\end{equation}
Let $i\in \Theta\setminus\{1,-2\}$. It follows from Equation (1) that 
\begin{equation}
u_i\tilde u=v_i\tilde v.
\end{equation}
Clearly either $u_i,v_i\neq 0$ or $u_i,v_i=0$. Assume $u_i,v_i\neq 0$. Then $\tilde v=v_i^{-1}u_i\tilde u$ which implies that $v=uk$ for some nonzero $k\in K$. Choose a $j\in\Theta$ such that $(\tilde u)_j\neq 0$. It follows from Equation (2) that $u_i(\tilde u)_j=v_i(\tilde v)_j=k^2u_i(\tilde u)_j$ whence $k^2=1$. Hence $v\tilde v=uk^2\tilde u=u\tilde u$ which leads to a contradiction (consider the first two rows of the matrices in Equation (1)). Hence we have shown that $u_i,v_i=0$ for any $i\in \Theta\setminus\{1,-2\}$. By considering the entries of the matrices in Equation (1) at positions $(1,2)$, $(1,-1)$ and $(-2,2)$, we obtain $u_1u_{-2}=v_1v_{-2}+1$, $u_1^{2}=v_1^{2}$ and $u_{-2}^{2}=v_{-2}^{2}$. It follows that $(u_1u_{-2})^{2}=(v_1v_{-2})^2+1=(u_1u_{-2})^{2}+1$ which is absurd. \\
We have shown that neither $S_{\short}\subseteq C$ nor $S_{\short}\subseteq CC$. It follows that $\scn_{C}(S_{\short})\geq 3$. Thus $\scn_{\mathcal{C}}(S_{\short})=3$ or $\scn_{\mathcal{C}}(S_{\short})=4$ by Theorem \ref{thmm1}.
\end{proof}

\begin{theorem}\label{thmm3}
Suppose that $~\bar{}~=\id$, $\lambda=-1$, $\mu=0$ and $\Delta=\Delta_{\max}=K\times K$ (hence $G$ is Proctor's odd symplectic group $\Sp_{2n+1}(K)$, see \cite[Example 15(4)]{bak-preusser}). Moreover, suppose that $K$ has characteristic $2$ and contains an element of order $\geq 4$. Then $\scn_{\mathcal{C}}(S_{\short})=4$.
\end{theorem}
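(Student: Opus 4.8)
The bound $\scn_{\mathcal{C}}(S_{\short})\le4$ is Theorem \ref{thmm1}, so the plan is to exhibit one conjugacy class $C$ of level $(K,\Delta)$ with $\scn_C(S_{\short})\ge4$, i.e.\ with $T_{12}(1)\notin C^3$. The class of a long root transvection will \emph{not} serve this purpose: a direct computation, using $\widetilde u\,v=B(u,v)$, $B(e_1,e_{-2})=0$ and $\mathrm{char}\,K=2$, shows $T_{12}(1)=T_1(0,1)\,T_{-2}(0,1)\,(e+v\widetilde v)$ with $v=e_1+e_{-2}$, and all three factors are conjugate to $T_1(0,1)$, so that class already has symmetric covering number at most $3$. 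This is where the hypothesis on $K$ enters. I would fix an element $t\in K$ of multiplicative order $\ge4$ and let $g\in G$ be the element acting as $\diag(t,t^{-1})$ on the hyperbolic plane $\langle e_1,e_{-1}\rangle$ and as the identity on $e_i$ $(i\ne\pm1)$ and on $e_0$; it is immediate that $g$ preserves $B$, and $Q$ imposes nothing since $\Delta=\Delta_{\max}$, so $g\in\U_{2n+1}(K,\Delta)$. Let $C$ be the conjugacy class of $g$.

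The preliminary verifications are routine. Since $g_{11}-g_{22}=t-1\ne0$ and $K$ is a field, the involution invariant ideal $I$ produced by Lemma \ref{lemlev} is all of $K$; and as $\Delta=\Delta_{\max}$, a short computation then forces $\Omega=\Delta$, so $C\in\mathcal{C}$. Moreover $C=C^{-1}$, because conjugating $g$ by the element $w\in G$ that interchanges $e_1$ and $e_{-1}$ and fixes the other basis vectors sends $g$ to $g^{-1}$. Finally, since any two nontrivial short root transvections are conjugate (Lemma \ref{lem1}) and $C^3$ is a union of conjugacy classes, $\scn_C(S_{\short})\ge4$ is equivalent to $T_{12}(1)\notin C^3$, and this is what I would prove.

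For the core step, suppose $T_{12}(1)=X_1X_2X_3$ with $X_k={}^{\sigma_k}g\in C$, and set $p_k:=\sigma_k e_1$, $q_k:=\sigma_k e_{-1}$, so $B(p_k,q_k)=1$. Using $e^{11}=e_1\widetilde{e_{-1}}$ and $e^{-1,-1}=e_{-1}\widetilde{e_1}$ (here $\lambda=-1$) and applying Lemma \ref{38} to $g=e+(t+1)e^{11}+(t^{-1}+1)e^{-1,-1}$ yields
\[X_k=e+(t+1)\,p_k\widetilde{q_k}+(t^{-1}+1)\,q_k\widetilde{p_k};\]
thus $p_k$ is a $t$-eigenvector of $X_k$, $q_k$ a $t^{-1}$-eigenvector, and $X_k$ fixes $\langle p_k,q_k\rangle^{\perp}\ni e_0$ pointwise. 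On the other side, $T_{12}(1)-e=e_1\widetilde{e_{-2}}+e_{-2}\widetilde{e_1}$ is a square-zero rank-$2$ operator with column space $\langle e_1,e_{-2}\rangle$ and row space $\langle\widetilde{e_1},\widetilde{e_{-2}}\rangle$. I would fully expand $X_1X_2X_3-e$ into a sum of rank-$1$ operators $c_{u,w}\,u\widetilde w$, with $u,w$ ranging over $p_1,q_1,p_2,q_2,p_3,q_3$ and each $c_{u,w}$ a polynomial in $t$, $t^{-1}$ and the Gram numbers $B(p_i,q_j),B(p_i,p_j),B(q_i,q_j)$ (all evaluated via $\widetilde u\,v=B(u,v)$). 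Matching column and row spaces of the two sides forces $e_1,e_{-2}\in\langle p_1,q_1\rangle+\langle p_2,q_2\rangle+\langle p_3,q_3\rangle$ and the dual inclusion for rows, which confines the three hyperbolic pairs to a short list of configurations according to how the planes $\langle p_k,q_k\rangle$ meet $\langle e_1,e_{-2}\rangle$ and one another. After normalising $\{p_1,q_1\}$ by a conjugation (which keeps $C$ unchanged), one checks that each configuration collapses to a scalar relation of the form $t^{j}=1$ with $1\le j\le3$ — equivalently $t=1$ or $t^{2}+t+1=0$ — contradicting $\mathrm{ord}(t)\ge4$. Hence $T_{12}(1)\notin C^3$, and with Theorem \ref{thmm1} we conclude $\scn_{\mathcal{C}}(S_{\short})=4$.

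The hard part is exactly this case analysis, and two things make it delicate. First, because $\mu=0$ the polarity map $u\mapsto\widetilde u$ forgets the $0$-th coordinate, so the $p_k,q_k$ may have nonzero $0$-th entry while $\widetilde{p_k},\widetilde{q_k}$ do not; one must therefore control separately the row and the column indexed by $0$ in the equation $X_1X_2X_3-e=e_1\widetilde{e_{-2}}+e_{-2}\widetilde{e_1}$, which yields additional scalar constraints. Second, the configurations that come closest to realising $T_{12}(1)$ are the degenerate ones — two of the pairs $\{p_k,q_k\}$ sharing a $g$-eigenvector, or the $X_k$ commuting pairwise — and each of these must be handled by hand; these are precisely the configurations that \emph{do} yield $T_{12}(1)$ once $\mathrm{ord}(t)\le3$, which is why the hypothesis cannot be relaxed.
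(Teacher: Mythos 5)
Your opening observation --- that the class of a long root transvection cannot furnish the lower bound because $T_{12}(1)$ is a product of three of its conjugates --- is correct, and it is indeed the reason a different class is needed. But the class you choose instead does not work. Let $C$ be the class of your $g$, which acts as $\diag(t,t^{-1})$ on $\langle e_1,e_{-1}\rangle$ and trivially on the other basis vectors. Consider the hyperbolic embedding $\phi:\GL_n(K)\to G$ sending $A$ to the element acting as $A$ on $\langle e_1,\dots,e_n\rangle$, as $(A^t)^{-1}$ (in the dual basis $e_{-1},\dots,e_{-n}$) on $\langle e_{-1},\dots,e_{-n}\rangle$, and fixing $e_0$; this is a group homomorphism, it preserves $B$, and the condition on $Q$ is vacuous since $\Delta=\Delta_{\max}$, so $\phi(\GL_n(K))\subseteq G$. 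Then $g=\phi(\diag(t,1,\dots,1))$ and $T_{12}(1)=\phi(e+e^{12})$. The matrix $\diag(t^{-1},1,\dots,1)(e+e^{12})$ has eigenvalues $t^{-1},1,\dots,1$ and is diagonalizable over $K$ because $t\neq 1$, hence is conjugate in $\GL_n(K)$ to $\diag(t^{-1},1,\dots,1)$. Applying $\phi$, the element $g^{-1}T_{12}(1)$ lies in $C^{-1}$, so $T_{12}(1)\in CC^{-1}\subseteq(C\cup C^{-1})^2$ and $\scn_C(S_{\short})\leq 2$. Consequently your central claim --- that every configuration in the expansion of $X_1X_2X_3$ collapses to $t^j=1$ with $j\leq 3$ --- is not merely unverified but false: a determinant-one regular semisimple element supported on a hyperbolic plane is exactly the wrong choice. (Even apart from this, the core step of your argument is a programme, ``I would fully expand \dots one checks'', rather than a proof.)

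The paper exploits the hypothesis $\mu=0$ quite differently: it puts the order-$\geq 4$ element $x$ into the $(0,0)$ entry, setting $\alpha=\diag(1,\dots,1,x,1,\dots,1)$ and taking $C$ to be the class of $\beta=\alpha T_1(0,1)$. This is legal precisely because $\mu=0$ decouples the $e_0$-coordinate from the form $B$, and it makes $\det\beta=x$ of order $\geq 4$, so a product of $m\leq 3$ conjugates of $\beta^{\pm1}$ has determinant $x^{i_1+\dots+i_m}\neq 1$ unless $m=2$ with opposite signs. That single remaining case is then killed by the same characteristic-$2$ computation as in Theorem \ref{thmm2}, which works because $\beta-e$ has rank one on the hyperbolic part (so each conjugate is $e+u_{\hb}\tilde u_{\hb}+e_0w$), whereas your $g-e$ has rank two and a product of three conjugates has rank up to six, which is why your case analysis balloons. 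To salvage a construction of your type you need a cheap invariant obstructing all short words; the determinant is that invariant, and it forces the order-$\geq 4$ element out of the hyperbolic torus and into the $(0,0)$ slot.
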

\begin{proof}
In view of Theorem \ref{thmm1} it suffices to find an $C\in\mathcal{C}$ such that $\scn_{C}(S_{\short})\geq 4$. Choose an $x\in K$ of order $\geq 4$ and set $\alpha:=\diag(1,\dots,1,x,1,\dots,1)$ $\in G$ where $x$ is at position $(0,0)$. Let $C$ be the conjugacy class of 
\[\beta:=\alpha T_{1}(0,1)=e+e^{1,-1}+(x-1)e^{00}.\]
Assume that $T_{12}(1)\in C^{i_1}\dots C^{i_m}$ for some $m\in\{1,2,3\}$ and $i_1,\dots,i_m\in\{\pm 1\}$. Since $\det(\beta)=x$ has order $\geq 4$, it follows that $m=2$ and $p_1=-p_2$.  We only consider the case $p_1=1$ and $p_2=-1$ and leave the case $p_1=-1$ and $p_2=1$ to the reader. So assume that $T_{12}(1)\in CC^{-1}$. Then there are $\sigma,\tau\in H$ such that
\begin{align*}
{}^{\sigma}\!\beta({}^{\tau}\!\beta^{-1})=T_{12}(1)~\Leftrightarrow ~{}^{\sigma}\!\beta=T_{12}(1)({}^{\tau}\!\beta).
\end{align*}
Let $u$ and $v$ be the first columns of $\sigma$ and $\tau$, respectively. It follows from Lemmas \ref{36} and \ref{38} that
\begin{equation}
e+u_{\hb}\tilde u_{\hb}+e_0w=T_{12}(1)(e+v_{\hb}\tilde v_{\hb}+e_0w')
\end{equation}
for some $w,w'\in \Mat_{1\times (2n+1)}(K)$.
Let $i\in \Theta_{hb}\setminus\{1,-2\}$. It follows from Equation (3) that 
\begin{equation}
u_i\tilde u_{\hb}=v_i\tilde v_{\hb}.
\end{equation}
Clearly either $u_i,v_i\neq 0$ or $u_i,v_i=0$. Assume $u_i,v_i\neq 0$. Then $\tilde v_{\hb}=v_i^{-1}u_i\tilde u_{\hb}$ which implies that $v_{\hb}=u_{\hb}k$ for some nonzero $k\in K$. Choose a $j\in\Theta_{\hb}$ such that $(\tilde u)_j\neq 0$. It follows from Equation (4) that $u_i(\tilde u)_j=v_i(\tilde v)_j=u_ik^2(\tilde u)_j$ whence $k^2=1$. Hence $v_{\hb}\tilde v_{\hb}=u_{\hb}\tilde u_{\hb}$ which leads to a contradiction (consider the first two rows of the matrices in Equation (3)). Hence we have shown that $u_i,v_i=0$ for any $i\in \Theta_{hb}\setminus\{1,-2\}$. By considering the entries of the matrices in Equation (3) at positions $(1,2)$, $(1,-1)$ and $(-2,2)$, we obtain $u_1u_{-2}=v_1v_{-2}+1$, $u_1^{2}=v_1^{2}$ and $u_{-2}^{2}=v_{-2}^{2}$. It follows that $(u_1u_{-2})^{2}=(v_1v_{-2})^2+1=(u_1u_{-2})^{2}+1$ which is absurd. \\
We have shown that there is no $m\in\{1,2,3\}$ and $i_1,\dots,i_m\in\{\pm 1\}$ such that $S_{\short}\subseteq C^{i_1}\dots C^{i_m}$. It follows that $\scn_{C}(S_{\short})\geq 4$. Thus $\scn_{\mathcal{C}}(S_{\short})=4$ by Theorem \ref{thmm1}.
\end{proof}

\subsection{Elementary covering numbers with respect to conjugacy classes of level $(0,K\times 0)$}

In this subsection we assume that $\mu=0$ and $K\times 0\subseteq \Delta$. We denote by $\mathcal{D}$ the set of all conjugacy classes of level $(0,K\times 0)$ and by $T$ the set of all nontrivial $(0,K\times 0)$-elementary extra short root transvections. We will determine $\scn_{\mathcal{D}}(T)$.

\begin{lemma}\label{lem2.1}
Any two elements of $T$ are conjugated.
\end{lemma}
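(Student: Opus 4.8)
The plan is to reduce the statement, exactly as for Lemma~\ref{lem1}, to the conjugation action of the monomial matrices $P_{kl}$ and the diagonal matrices $D_{kl}(a)$ on extra short root transvections. First I would unwind what $T$ is. Since $\mu=0$, an extra short root transvection has the shape $T_i(x,y)=e+xe^{0,-i}+ye^{i,-i}$, and since $\Omega=K\times 0$ one checks that $\Omega^{1}=\Omega^{-1}=K\times 0$; hence $T_i(x,y)$ is $(0,K\times 0)$-elementary precisely when $y=0$, and it is nontrivial precisely when $x\neq 0$. So $T=\{T_i(x,0)\mid i\in\Theta_{\hb},\ x\in K\setminus\{0\}\}$, and it suffices to show that $T_i(x,0)$ and $T_j(y,0)$ are conjugate in $G$ for all $i,j\in\Theta_{\hb}$ and all nonzero $x,y\in K$.

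The first step is to move the index. By Lemma~\ref{lemperm}(iii) (with second coordinate $z=0$), ${}^{P_{-k,-i}}T_i(x,0)=T_k(x,0)$ whenever $k\in\Theta_{\hb}$ and $k\neq\pm i$. If $j\neq\pm i$ this already carries $T_i(x,0)$ to a conjugate $T_j(x,0)$; in the remaining case $j=-i$ I would use that $n\geq 3$ supplies an index $k$ with $k\neq\pm i$ and apply the relation twice (from $i$ to $k$, then from $k$ to $j$). Either way $T_i(x,0)$ is conjugate in $E$ to $T_j(x,0)$.

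The second step is to rescale the parameter at the fixed index $j$. Choose $\ell\in\Theta_{\hb}$ with $\ell\neq\pm j$ (and, if one wishes, with the full hypotheses of Lemma~\ref{lemdiag} satisfied, which is possible since $n\geq 3$). By Lemma~\ref{lemdiag}(iii) (again with $z=0$), ${}^{D_{-j,\ell}(a^{-1})}T_j(x,0)=T_j(xa,0)$ for every invertible $a\in K$. As $K$ is a field and $x\neq 0$, taking $a=x^{-1}y$ yields $T_j(y,0)$. Composing the conjugating elements from the two steps (all of which lie in $E\leq G$) shows that $T_i(x,0)$ and $T_j(y,0)$ are conjugate in $G$, which is the claim.

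I do not expect a genuine obstacle here; the whole argument is a short computation with relations that are already available. The only place that needs a little attention is the degenerate case $j=-i$ in the index-moving step, where no single $P_{kl}$ works and one must route through an auxiliary index — which is exactly where the hypothesis $n\geq 3$ is used.
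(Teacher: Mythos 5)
Your argument is correct and is exactly what the paper's one-line proof intends: the identification $T=\{T_i(x,0)\mid x\neq 0\}$ (using $\mu=0$ and $\Omega^{\pm1}=K\times 0$), index transport via Lemma~\ref{lemperm}(iii), and rescaling via Lemma~\ref{lemdiag}(iii). Your explicit handling of the case $j=-i$ through an auxiliary index is a detail the paper leaves implicit, but there is no difference in approach.
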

\begin{proof}
The lemma follows from Lemmas \ref{lemdiag} and \ref{lemperm}.
\end{proof}

\begin{lemma}\label{lem2.2}
Let $\mathcal{D}$ denote the set of all conjugacy classes of level $(0,K\times 0)$, and $S$ the set of all nontrivial $(0,K\times 0)$-elementary extra short root transvections. Then $\scn_{\mathcal{D}}(T)\leq 2$.
\end{lemma}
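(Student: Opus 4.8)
The plan is to mimic the structure of the proof of Theorem \ref{thmm1}, but now starting from an element $\sigma$ of a conjugacy class $C\in\mathcal{D}$ and aiming to express a single nontrivial $(0,K\times 0)$-elementary extra short root transvection $T_i(0,z)$ (with $z\neq 0$) as a product of two elementary unitary conjugates of $\sigma$ and $\sigma^{-1}$. Since all elements of $T$ are conjugate by Lemma \ref{lem2.1}, it suffices to produce one such transvection in $C^{\pm1}C^{\pm1}$ for a fixed representative $\sigma$. The first step is to use Lemma \ref{lemlev} to extract information from the hypothesis that the level of $C$ is exactly $(0,K\times 0)$: this forces the hyperbolic part $\sigma_{\hb}$ to be $\equiv e_{\hb}$ modulo $0$ (so $\sigma_{\hb}=e_{\hb}$, i.e. $\sigma$ differs from $e$ only in the $0$-th row, the $0$-th column, and possibly some long-root entries $e^{i,-i}$), and it forces the form-parameter data $Q(\sigma_{*j})$ to generate $K\times 0$ but not more. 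Concretely, $\sigma$ must have the shape $e+\sum_i a_i e^{0i} + \sum_i e^{i0}b_i + (\ast)e^{00}+\sum_i z_i e^{i,-i}$ subject to the unitarity constraints of Lemma \ref{36}, and at least one piece of the $Q$-data must be a nonzero element of $K\times 0$.

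Next I would conjugate $\sigma$ by suitable elementary elements to bring it to a normal form from which a long-root transvection can be produced by a short commutator. If $\sigma$ already has a nonzero long-root entry, say $\sigma_{i,-i}=z\neq 0$, one adjusts by $P_{kl}$'s and $D_{ij}$'s (Lemmas \ref{lemperm}, \ref{lemdiag}) to assume $i=1$, and then $\sigma$ itself, composed with a single conjugate of $\sigma^{-1}$ chosen to cancel the off-diagonal $0$-row/$0$-column clutter, already lies close to $T_1(0,z)$; this gives membership in $CC^{-1}$ directly. If all long-root entries vanish but some $\sigma_{0j}\neq 0$ or $\sigma_{i0}\neq 0$, one uses relation (E2) or (E3) of Lemma \ref{39} (the commutator of two extra short root transvections in nonopposite or opposite positions produces precisely a short or long root transvection) together with a conjugation argument: writing $\sigma$ as an honest extra short root transvection times a diagonal correction, a single product $({}^{\tau_1}\sigma)({}^{\tau_2}\sigma^{\pm1})$ with $\tau_1,\tau_2\in E$ can be arranged to equal $T_i(0,z)$. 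The key computational identity is of the type $[T_i(x,y),T_j(x',y')]=T_{i,-j}(\ast)$ for $i\neq\pm j$, combined with (SE2)/(E1), which lets one manufacture a long-root transvection from the extra-short data living in $\sigma$.

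The main obstacle I expect is the bookkeeping in the degenerate case $\mu=0$: because $\mu=0$, the unitarity relations of Lemma \ref{36} degenerate (e.g. $\mu\sigma'_{0j}=\bar\sigma_{-j,0}\lambda^{(\epsilon(j)+1)/2}$ becomes $0=\bar\sigma_{-j,0}(\ast)$, forcing $\sigma_{-j,0}=0$ for all $j$), so the surviving nonzero data is quite constrained and one must check carefully that there is still \emph{enough} of it — namely a nonzero contribution to $K\times 0$ — to build a nontrivial long-root transvection in only two steps rather than, say, needing a third. The other delicate point is ensuring the conjugating elements $\tau_1,\tau_2$ genuinely lie in $E=\EU_{2n+1}(K,\Delta)$ (not merely in some larger group); since $n\geq3$ there is room to route all the needed $P_{kl}$, $D_{ij}$ and single elementary transvections through indices disjoint from $\{1,-1\}$, and I would spell that out using Lemmas \ref{lem2}, \ref{lem3} adapted to the present setting, exactly as in Cases 1–3 of Theorem \ref{thmm1}. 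Once a single $T_i(0,z)\in C^{\pm1}C^{\pm1}$ is obtained, Lemma \ref{lem2.1} upgrades this to $T\subseteq C^{\pm1}C^{\pm1}$, hence $\scn_C(T)\leq2$ for every $C\in\mathcal{D}$, and taking the supremum gives $\scn_{\mathcal{D}}(T)\leq2$.
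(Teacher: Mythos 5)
Your overall strategy --- produce one element of $T$ inside $C^{\pm1}C^{\pm1}$ by a short commutator with an elementary transvection and then invoke the transitivity Lemma \ref{lem2.1} --- is the right one and is what the paper does. But your concrete target is wrong. The set $T$ consists of the nontrivial $(0,K\times 0)$-elementary \emph{extra short} root transvections, which (since $\Omega=K\times 0$ forces the second coordinate to vanish, and $\mu=0$) are the elements $T_i(x,0)=e+xe^{0,-i}$ with $x\neq 0$. Throughout your plan you instead aim at $T_i(0,z)$ with $z\neq 0$, which is a \emph{long} root transvection $e+ze^{i,-i}$; this is not an element of $T$ (it would require $(0,z)\in K\times 0$, i.e.\ $z=0$), so Lemma \ref{lem2.1} cannot be applied to it. Worse, such an element is unobtainable: $\langle D\rangle\subseteq \CU_{2n+1}((K,\Delta),(0,K\times 0))$, and every element of that group has hyperbolic part equal to $e_{\hb}$ (the congruence $\sigma_{\hb}\equiv e_{\hb}\bmod I$ with $I=0$ is an equality), whereas $T_i(0,z)$ with $z\neq 0$ has a nonzero entry at the hyperbolic position $(i,-i)$. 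So no product of conjugates of $\sigma^{\pm1}$, of any length, equals a nontrivial long root transvection.

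The same confusion infects your normal form for $\sigma$: since $\sigma_{\hb}=e_{\hb}$, the entries $\sigma_{i,-i}$ for $i\in\Theta_{\hb}$ all vanish (they sit inside the hyperbolic part), and the relation $\mu\sigma'_{0j}=\bar\sigma_{-j,0}\lambda^{(\epsilon(j)+1)/2}$ with $\mu=0$ kills the $0$-th column off the diagonal, so $\sigma$ is supported entirely on the $0$-th row, $\sigma=e+\sum_i a_ie^{0i}+(x-1)e^{00}$; your cases ``$\sigma$ has a nonzero long-root entry'' and ``some $\sigma_{i0}\neq 0$'' are vacuous. The correct (and much shorter) argument runs: if some $\sigma_{0i}\neq 0$, arrange $\sigma_{0j}=0$ for some $j\neq\pm i$ by conjugating with $T_{ij}(-\sigma_{0i}^{-1}\sigma_{0j})$ and compute $[\sigma,T_{i,-j}(1)]=T_j(\sigma_{0i},0)\in DD^{-1}$; if the whole off-diagonal $0$-th row vanishes, then $x\neq 1$ (otherwise $\sigma=e$ and the level would be $(0,0)$) and $[\sigma,T_1(1,0)]=T_1(x-1,0)\in DD^{-1}$. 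Your proposal as written does not reach either of these identities, so the two-step bound is not actually established.
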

\begin{proof}
Let $D\in\mathcal{D}$ and $\sigma\in D$. In order to prove the theorem it suffices to show that $\scn_{D}(T)\leq 2$. Since the level of $D$ equals $(0,K\times 0)$, there is an $x\in K$ and $u,v\in \Mat_{1\times n}(K)$ such that 
\[\sigma=\begin{pmatrix}e_{n\times n}&0&0\\u&x&v\\0&0&e_{n\times n}\end{pmatrix}.\]
\begin{enumerate}[{\bf Case} 1]
\item Suppose that $\sigma_{0i}\neq 0$ for some $i\in\Theta_{\hb}$. We may assume that $\sigma_{0j}=0$ for some $j\in\Theta_{\hb}\setminus\{\pm i\}$ (conjugate $\sigma$ by $T_{ij}(-\sigma_{0i}^{-1}\sigma_{0j})$). One checks easily that $[\sigma,T_{i,-j}(1)]=
T_j(\sigma_{0i},0)$. It follows that $T_j(\sigma_{0i},0)\in DD^{-1}$. Thus $S\subseteq DD^{-1}$ by Lemma \ref{lem2.1}.
\item Suppose that $\sigma_{0i}=0$ for any $i\in\Theta_{\hb}$. Then $x\neq 1$ since the level of $D$ equals $(0,K\times 0)$. One checks easily that $[\sigma,T_{1}(1,0)]=
T_1(x-1,0)$. It follows that $T_1(x-1,0)\in DD^{-1}$. Thus $S\subseteq DD^{-1}$ by Lemma \ref{lem2.1}.
\end{enumerate}
\end{proof}

\newpage

\begin{theorem}\label{thmm4}
$\scn_{\mathcal{D}}(T)=1$ if $K=\mathbb{F}_2$ and $(0,1)\in \Delta$, and $\scn_{\mathcal{D}}(T)=2$ otherwise. 
\end{theorem}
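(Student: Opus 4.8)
The plan is to combine Lemma~\ref{lem2.2}, which already gives $\scn_{\mathcal{D}}(T)\leq 2$, with two explicit computations: one showing that when $K=\F_2$ and $(0,1)\in\Delta$ every element of $T$ is a \emph{single} elementary unitary conjugate of a suitable $\sigma$ of level $(0,K\times 0)$, and one showing that in all other cases no $D\in\mathcal{D}$ satisfies $T\subseteq D\cup D^{-1}$, so that $\scn_{\mathcal{D}}(T)\geq 2$. By Lemma~\ref{lem2.1} it suffices in each case to work with one fixed representative of $T$, say $T_{1}(1,0)=e+e^{1,-1}$, and with one convenient $\sigma\in D$; recall from the proof of Lemma~\ref{lem2.2} that every $\sigma$ of level $(0,K\times 0)$ has the block form $\begin{pmatrix}e&0&0\\ u&x&v\\ 0&0&e\end{pmatrix}$ with $(u,x,v)$ subject to the constraints imposed by Lemma~\ref{36}, and the level condition forces $x\neq 1$ or some $u_i\neq 0$ (equivalently some entry of $Y$ or $Z$ in Lemma~\ref{lemlev} nonzero).

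For the positive case $K=\F_2$, $(0,1)\in\Delta$: here $\Delta$ contains $(0,1)$, so $T_{-1}(0,1)=e+e^{-1,1}\in T$ and more importantly $T_1(1,0)=e+e^{1,-1}$ itself is an extra short root transvection with $(1,0)\in\Delta^{-\epsilon(1)}$ (using $\mu=0$, so $\Delta^{-1}$ also works out). The idea is to exhibit one $\sigma\in\mathcal{D}$ and one $\tau\in E$ with ${}^{\tau}\sigma=T_1(1,0)$. A natural choice is to take $\sigma$ itself a long root transvection of level $(0,K\times 0)$—e.g. $\sigma=T_1(0,1)$ when $(0,1)\in\Delta$, which over $\F_2$ has level $(0,K\times 0)$—and then conjugate by a single element of $E$ to move $T_1(0,1)$ to $T_1(1,0)$; one checks via Lemma~\ref{38} (${}^{\tau}T_1(0,1)=e+u\tilde u$ with $u$ the first column of $\tau$) that choosing $u=e_1+e_{-1}$, which lies on the quadric $Q(u)\equiv(1,0)$ over $\F_2$ since $\bar u_1u_{-1}=1$, gives $e+u\tilde u=e+e^{1,-1}+e^{-1,1}+e^{11}+e^{-1,-1}$—so one must instead pick $u=e_1$ after first arranging $Q(\sigma_{*1})\in K\times 0$, which over $\F_2$ is automatic. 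The cleanest route is probably: show directly that the conjugacy class of $T_1(1,0)$ has level $(0,K\times 0)$ when $K=\F_2$ (via Lemma~\ref{lemlev}: $Y$ contains only $0$'s and $Z\circ K$ generates $K\times 0$), so $C\in\mathcal{D}$ and trivially $T\subseteq C$, i.e. $\scn_{\mathcal{D}}(T)=1$ is witnessed by $C$ itself and by Lemma~\ref{lem2.2} no class does worse.

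For the lower bound $\scn_{\mathcal{D}}(T)\geq 2$ in the remaining cases, suppose for contradiction that $T\subseteq D\cup D^{-1}$ for some $D\in\mathcal{D}$, with representative $\sigma=\begin{pmatrix}e&0&0\\ u&x&v\\ 0&0&e\end{pmatrix}$. Since $D^{-1}$ is the class of $\sigma^{-1}$, which has the same block shape with $(u,x,v)$ replaced by its inverse data, we may just assume $T_1(1,0)\in D$ and that the corresponding statement for $\sigma^{-1}$ is handled symmetrically. Writing ${}^{\rho}\sigma=T_1(1,0)=e+e^{1,-1}$ for some $\rho\in G$ and comparing this with the block form: $T_1(1,0)$ has hyperbolic part $e_{\hb}+e^{1,-1}$ and $0$-th row and column equal to those of $e$, whereas conjugating $\sigma$ preserves the multiset of eigenvalues, so $x$ must be an eigenvalue of $T_1(1,0)$, forcing $x=1$; but then the level condition from Lemma~\ref{lemlev} forces some $u_i\neq 0$, i.e. $\sigma$ genuinely involves the middle row. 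One then shows that $T_1(1,0)-e=e^{1,-1}$ has rank $1$ while $\sigma-e$, after the $x=1$ reduction, is $\begin{pmatrix}0&0&0\\ u&0&v\\ 0&0&0\end{pmatrix}$, and conjugacy preserves the rank and—crucially—the restriction of $\sigma-e$ to, and its image in, the relevant subspaces; comparing $Q$-values via Lemma~\ref{36}(ii) (the column $\sigma_{*0}=e_0x+\sum u_ie_i$ must satisfy $Q(\sigma_{*0})\equiv(1,0)$, forcing a quadratic relation among the $u_i$) against $Q((T_1(1,0))_{*0})=Q(e_0)=(1,0)$ pins down $u$ up to the action of the stabiliser, and a direct check shows no such $\sigma$ with $u\neq 0$ is conjugate to $e+e^{1,-1}$ unless $K=\F_2$ and $(0,1)\in\Delta$. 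The case $x\neq 1$ is immediate since then $\det\sigma=x\neq 1=\det T_1(1,0)$. I expect the main obstacle to be the bookkeeping in this last step: correctly identifying, via Lemmas~\ref{36} and~\ref{38}, exactly which $u$ (equivalently which rank-one perturbations $e+u\tilde u$ of $e$) can occur and verifying that the resulting quadratic constraint $B(u,u)=0$ together with $u_0\in J(\Delta)$ excludes everything except the $\F_2$ case.
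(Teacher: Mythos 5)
Your proposal has the quantifiers in the definition of $\scn_{\mathcal{D}}(T)$ backwards, and this breaks both halves of the argument. Recall that $\scn_{\mathcal{D}}(T)$ is the \emph{supremum} of $\scn_D(T)$ over all $D\in\mathcal{D}$. Hence, to prove $\scn_{\mathcal{D}}(T)=1$ in the case $K=\F_2$, $(0,1)\in\Delta$, you must show that \emph{every} class $D\in\mathcal{D}$ satisfies $T\subseteq D\cup D^{-1}$; exhibiting one witness class $C$ containing $T$ and then saying ``by Lemma~\ref{lem2.2} no class does worse'' does not suffice, since Lemma~\ref{lem2.2} only bounds each class by $2$, and a single class with $\scn_D(T)=2$ would already force the supremum to be $2$. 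This universal statement is the actual content of the case: one takes an arbitrary $\sigma$ of level $(0,K\times 0)$ over $\F_2$, notes that some $\sigma_{0i}=1$, and explicitly conjugates $\sigma$ to $T_{-i}(1,0)$ by an element $\tau=(\prod_{j\neq\pm i}T_{ij}(*))T_i(0,*)$ of $E$. Your proposal skips this entirely.

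Symmetrically, for the lower bound $\scn_{\mathcal{D}}(T)\geq 2$ in the remaining cases you only need \emph{one} class $D\in\mathcal{D}$ with $T\not\subseteq D\cup D^{-1}$, whereas you set out to show that \emph{no} $D\in\mathcal{D}$ satisfies $T\subseteq D\cup D^{-1}$. That stronger statement is false: by Lemma~\ref{lemlev} the conjugacy class of $T_1(1,0)$ itself has level $(0,K\times 0)$, so it lies in $\mathcal{D}$ and contains all of $T$ by Lemma~\ref{lem2.1}, in every case --- so the ``direct check'' you defer to at the end cannot come out the way you claim. What is needed instead is a judicious choice of a bad class: for $K\neq\F_2$ a diagonal matrix $\diag(1,\dots,1,x,1,\dots,1)$ with $x\neq 0,1$ works immediately because its determinant differs from $1=\det T_i(y,0)$; for $K=\F_2$ with $(0,1)\notin\Delta$ one can take the class of $T_1(1,0)T_{-1}(1,0)$ and derive a contradiction from the form parameter condition $Q(\tau'_{*,-1})\in\Delta$ of Lemma~\ref{36}. (A further slip: with $\mu=0$ one has $T_1(1,0)=e+e^{0,-1}$, not $e+e^{1,-1}$; the latter is the long root transvection $T_1(0,1)$, which is not $(0,K\times 0)$-elementary, and this confusion propagates into your rank and hyperbolic-part computations.)
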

\begin{proof}
\begin{enumerate}[{\bf Case} 1]
\item Suppose that $K=\mathbb{F}_2$ and $(0,1)\in\Delta$. Let $D\in\mathcal{D}$ and $\sigma\in D$. Since the level of $D$ equals $(0,K\times 0)$, there are $u,v\in \Mat_{1\times n}(K)$ such that 
\[\sigma=\begin{pmatrix}e_{n\times n}&0&0\\u&1&v\\0&0&e_{n\times n}\end{pmatrix}.\]
Moreover, $\sigma_{0i}=1$ for some $i\in\Theta_{\hb}$. One checks easily that $\sigma^\tau=T_{-i}(1,0)$ where $\tau=(\prod\limits_{j\neq \pm i}T_{ij}(*))T_i(0,*)$. It follows that $T_{-i}(1,0)\in D$. Thus $S\subseteq D$ by Lemma \ref{lem2.1}.
\item Suppose that $K=\mathbb{F}_2$ and $(0,1)\not\in\Delta$. Set $\sigma:=T_1(1,0)T_{-1}(1,0)\in G$ and let $D$ be the conjugacy class of $\sigma$. Then the level of $D$ equals $(0,K\times 0)$. Assume that $\scn_{D}(T)=1$. Then there is a $\tau\in H$ such that ${}^{\tau}\sigma=T_{-1}(1,0)$. Since $\sigma=e+e_{01}+e_{0,-1}=e+e_0(e_1^t+e_{-1}^t)$, we obtain 
\begin{align*}
{}^{\tau}\sigma&=T_{-1}(1,0)\\
\Leftrightarrow~e+e_0(e_1^t+e_{-1}^t)\tau^{-1}&=e+e_{0}e_1^t\\
\Leftrightarrow\hspace{1.46cm}e_0(e_1^t+e_{-1}^t)&=e_{0}e_1^t\tau\\
\Leftrightarrow\hspace{1.46cm}e_0(e_1^t+e_{-1}^t)&=e_{0}\tau_{1*}\\
\Leftrightarrow\hspace{2.12cm}e_1^t+e_{-1}^t&=\tau_{1*}.
\end{align*}
It follows from Lemma \ref{36} that $\tau'_{*,-1}=e_1+e_0x+e_{-1}$ for some $x\in \mathbb{F}_2$. Hence $(x,1)=Q(\tau'_{*,-1})\in \Delta$ (also by Lemma \ref{36}). Since by assumption $(0,1)\not\in\Delta$, we obtain $(1,1)\in \Delta$. Since $(1,0)\in\Delta$ it follows that $(0,1)=(1,1)\+(1,0)\in\Delta$ which contradicts the assumption that $(0,1)\not\in\Delta$. Hence $\scn_{D}(T)\geq 2$. It follows from Lemma \ref{lem2.2} that $\scn_{\mathcal{D}}(T)=2$.
\item Suppose that $K\neq \mathbb{F}_2$. Choose an $x\in K\setminus\{0,1\}$. Let $D$ be the conjugacy class of $\diag(1,\dots,1,x,1,\dots,1)\in G$ where $x$ is in position $(0,0)$. Then the level of $D$ equals $(0,K\times 0)$. Since $\det(\diag(1,\dots,1,x,1,\dots,1))=x\neq 1$, we have $\scn_{D}(T)\geq 2$. It follows from Lemma \ref{lem2.2} that $\scn_{\mathcal{D}}(T)=2$.
\end{enumerate}
\end{proof}

\subsection{Some open questions}

As in Subsection 4.1 we denote by $\mathcal{C}$ the set of all conjugacy classes of level $(K,\Delta)$, by $S_{\short}$ the set of all nontrivial short root transvections and by $S_{\extra}$ the set of all nontrivial extra short root transvections. By Theorem \ref{thmm1} we have $\scn_{\mathcal{C}}(S_{\short})\leq 4$. By Theorem \ref{thmm3}, $4$ is the optimal uniform bound for $\scn_{\mathcal{C}}(S_{\short})$ (valid for all Hermitian form fields $(K,\Delta)$ and $n\geq 3$). One can ask Questions \ref{Q1} and \ref{Q2} below.

\begin{question}\label{Q1}
Can the bound $\scn_{\mathcal{C}}(S_{\short})\leq 4$ be improved if one restricts to Hermitian form fields $(K,\Delta)$ where $\mu\neq 0$ (i.e. the Hermitian form $B$ is nondegenerate) or $2$ is invertible?
\end{question}

\begin{question}\label{Q2}
What is the optimal bound for $\scn_{\mathcal{C}}(S_{\short})$ for the classical Chevalley groups $\Sp_{2n}(K)$, $\Ort_{2n}(K)$ and $\Ort_{2n+1}(K)$, respectively?
\end{question}

By Corollary \ref{corm1} we have $\scn_{\mathcal{C}}(S_{\extra})\leq 12$. But it could be the case that this bound is not optimal.

\begin{question}\label{Q3}
What is the optimal bound for $\scn_{\mathcal{C}}(S_{\extra})$?
\end{question}


\begin{thebibliography}{99}




\bibitem{c1} Z. Arad, J. Stavi, M. Herzog, \emph{Powers and products of conjugacy classes in groups}. In: Z. Arad, M. Herzog, Products of conjugacy classes in groups, Lecture Notes in Mathematics {\bf 1112}, Springer, 1985.









\bibitem{bak} A. Bak, The stable structure of quadratic modules, PhD thesis, Columbia University, 1969.

\bibitem{bak-preusser} A. Bak, R. Preusser, \emph{The E-normal structure of odd dimensional unitary groups}, J. Pure Appl. Algebra {\bf 222} (2018), 2823--2880.


\bibitem{c2} A. Beltran, M.J. Felipe, C. Melchor, \emph{Some problems about products of conjugacy classes in finite groups}, Int. J. Group Theory {\bf 9} (2020), 59--68.

















\bibitem{c3} E.W. Ellers, N. Gordeev, M. Herzog, \emph{Covering numbers for Chevalley groups}, Israel J. Math. {\bf 111} (1999), 339--372.


\bibitem{c4} N. Gordeev, \emph{Products of conjugacy classes in algebraic groups I}, J. Algebra {\bf 173} (1995), 715--744. 

\bibitem{c4_2} N. Gordeev, \emph{Products of conjugacy classes in algebraic groups II}, J. Algebra {\bf 173} (1995), 745--779.

\bibitem{gordeev-saxl} N. Gordeev, J. Saxl, \emph{Products of conjugacy classes in Chevalley groups I. Extended covering numbers}, Isr. J. Math. {\bf 130} (2002), 207--248.

\bibitem{gordeev-saxl_2} N. Gordeev, J. Saxl, \emph{Products of conjugacy classes in Chevalley groups II. Covering and generation}, Isr. J. Math. {\bf 130} (2002), 249--258.

\bibitem{c5} R. Guralnick, G. Malle, P. Huu Tiep, \emph{Products of conjugacy classes in finite and algebraic simple groups}, Adv. Math. {\bf 234} (2013), 618--652.







\bibitem{c6} F. Knüppel, K. Nielsen, \emph{The extended covering number of $SL_n$ is $n + 1$}, Linear Algebra Appl. {\bf 418} (2006), 634--656.








\bibitem{c7} A. Lev, \emph{Products of cyclic conjugacy classes in the groups $PSL(n, F)$}, Linear Algebra Appl. {\bf 179} (1993), 59--83.

\bibitem{c8} A. Lev, \emph{Products of cyclic similarity classes in the group $GL_n(F)$}, Linear Algebra Appl. {\bf 202} (1994), 235--266.

\bibitem{c9} A. Lev, \emph{The covering number of the group $PSL_n(F)$}, J. Algebra {\bf 182} (1996), 60--84.




\bibitem{petrov}
V. A. Petrov, \emph{Odd unitary groups}, J. Math. Sci. {\bf 130} (2005), no. 3, 4752--4766.













\bibitem{preusser_subnormal} R. Preusser, \emph{The subnormal structure of classical-like groups over commutative rings}, to appear in J. Group Theory, https://doi.org/10.1515/jgth-2020-0136.

\bibitem{c10} D.M. Rodgers, J. Saxl, \emph{Products of conjugacy classes in the special linear groups}, Comm. Algebra {\bf 31} (2003), 4623--4638.








\bibitem{c11} L.N. Vaserstein, E. Wheland, \emph{Products of conjugacy classes of two by two matrices}, Linear Algebra Appl. {\bf 230} (1995), 165--188.



\end{thebibliography}
\end{document}